\documentclass{article}
\usepackage[utf8]{inputenc}

\usepackage[all,2cell]{xy}

\usepackage{graphicx}
\usepackage{amsmath}
\usepackage{amsfonts}
\usepackage{amssymb}
\usepackage{appendix}
\usepackage{amsthm}
\newcommand{\Dial}{\mathfrak{Dial}(\mathsf{C})}
\newcommand{\mC}{\mathcal{C}}
\newcommand{\mV}{\mathcal{V}}
\newcommand{\mD}{\mathcal{D}}





\newcommand{\fibration}[3]{#2 \colon #1 \longrightarrow #3}



\newcommand{\arrow}[3]{#1 \xrightarrow{#2} #3}

\newcommand{\injarrow}[3]{\xymatrix{#1 \ar@{^{(}->}[r]^{#2} &#3}}



\def\op{\operatorname{op}}

\def\x{\times}
\def\pr{\pi}



\newcommand{\twomorphism}[6]{\xymatrix{
#1^{\op} \ar[rrd]^#2_{}="a" \ar[dd]_{#3^{\op}}\\
&& \pos\\
#5^{\op}  \ar[rru]_#6^{}="b"
\ar_{#4}  "a";"b"}}
\newcommand{\comsquare}[8]{ \xymatrix@+1pc{ 
#1 \ar[r]^{#5} \ar[d]_{#6} & #2 \ar[d]^{#7} \\
#3 \ar[r]_{#8} & #4 
}}


\newcommand{\dial}[1]{\mathfrak{Dial}(#1)}

\newcommand{\bemph}[1]{\textbf{\emph{#1}}}
\newcommand{\angbr}[2]{\langle #1,#2 \rangle}

\def\mG{\mathcal{G}}
\def\mX{\mathcal{X}}
\newcommand{\Pred}[1]{\mathbf{Pred}(#1)}

\def\pos{\operatorname{\mathbf{Pos}}}

\def\hey{\operatorname{\mathbf{Hey}}}

\def\set{\operatorname{\mathbf{Set}}}

\newcommand{\quadratocomm}[8]{ \xymatrix@+1pc{ 
#1 \ar[r]^{#5} \ar[d]_{#6} & #2 \ar[d]^{#7} \\
#3 \ar[r]_{#8} & #4 
}}

\renewcommand{\prod}{\forall}
\newcommand{\doctrine}[2]{#2 \colon #1^{\op}  \longrightarrow \pos}

\newcommand{\hyperdoctrine}[2]{#2 \colon #1^{\op}  \longrightarrow \hey}
\newcommand{\exfreedoctrine}[1]{#1^{\exists\text{-free}}}
\newcommand{\unfreedoctrine}[1]{#1^{\forall\text{-free}}}
\newcommand{\excomp}[1]{#1^{\exists}}
\newcommand{\uncomp}[1]{#1^{\forall}}
\newcommand{\quantfreedoctrine}[1]{#1^{\exists\forall\text{-free}}}
\newcommand{\CatDoc}{\mathsf{Doc}}
\newcommand{\ED}{\mathsf{ExD}}
\newcommand{\UD}{\mathsf{UnD}}

\usepackage{hyperref}

 \theoremstyle{plain} 
 \newtheorem{theorem}{Theorem}[section]
 \newtheorem{corollary}[theorem]{Corollary}
 
 \newtheorem{proposition}[theorem]{Proposition}

 \theoremstyle{definition} 
 \newtheorem{definition}[theorem]{Definition}
 \newtheorem{remark}[theorem]{Remark}


\title{Dialectica Principles via Gödel Doctrines\thanks{Research supported by the project MIUR PRIN 2017FTXR IT-MaTTerS (Trotta) and by a School of Mathematics EPSRC Doctoral Studentship (Spadetto)}}
\author{Davide Trotta \\ \footnotesize University of Pisa \\ \scriptsize \textsf{trottadavide92@gmail.com} \and Matteo Spadetto \\ \footnotesize University of Leeds \\ \scriptsize \textsf{matteo.spadetto.42@gmail.com} \and Valeria de Paiva \\ \footnotesize Topos Institute \\ \scriptsize \textsf{valeria@topos.institute}}
\date{}
\begin{document}

\maketitle

\begin{abstract}
Gödel's Dialectica interpretation was conceived as a tool to obtain the consistency of Peano arithmetic via a proof of consistency of Heyting arithmetic in the 40s. In recent years, several proof theoretic transformations, based on Gödel's Dialectica interpretation, have been used systematically to extract new content from classical proofs, following a suggestion of Kreisel. Thus, the interpretation has found new relevant applications in several areas of mathematics and computer science. 
Several authors have  explained the Dialectica interpretation in categorical terms. In particular, de Paiva introduced the notion of a \emph{Dialectica category} as an internal version of Gödel's Dialectica Interpretation in her doctoral work. This was generalised by Hyland and Hofstra, who considered the interpretation in terms of fibrations. In our previous work,  we introduced an \emph{intrinsic} presentation of the Dialectica construction via a generalisation of Hofstra's work, using the notion of 
Gödel fibration and its proof-irrelevant version, a Gödel doctrine. 
The key idea is that G\"odel fibrations (and doctrines) can be thought of as fibrations generated by some basic elements playing the role of \emph{quantifier-free elements}. 
This categorification of quantifier-free elements is crucial not only to show that our notion of  Gödel fibration
is equivalent to Hofstra's Dialectica fibration in the appropriate way, but also to show how Gödel doctrines embody the main logical features of the Dialectica Interpretation. 
To show that, we derive the soundness of the interpretation of the implication connective, as expounded by Troelstra, in the categorical model. This requires extra logical principles, going beyond intuitionistic logic, namely Markov Principle (MP) and the Independence of Premise (IP) principle, as well as some choice. We show how these principles are satisfied in the categorical setting, establishing a tight (internal language) correspondence between the logical system and the categorical framework. This tight correspondence should come handy not only when
discussing the traditional applications of the Dialectica, but also when dealing with some newer uses of the 
interpretation, as in  modelling games or  concurrency theory. 
Finally, to complete our analysis, we characterise categories obtained as results of the tripos-to-topos of Hyland, Johnstone and Pitts applied to Gödel doctrines.

\bigskip

\noindent
\textbf{Keywords.} G\"odel doctrine; Hyperdoctrine; Dialectica doctrine; Dialectica category; Dialectica interpretation; Logical principles.

\end{abstract}

\section{Introduction}
Gödel is known, amongst many other things, by his Platonistic views of Mathematics. This is not what the  `Gödel doctrines' in our title are about. Doctrines in our title are meant in Lawvere's  sense of `hyperdoctrines', a special kind of functor in Category Theory~\cite{lawvere1970}. Gödel doctrines then refer to Gödel because of his Dialectica Interpretation \cite{goedel1986}, a modified form of Hilbert's program, which shows consistency of logical systems by interpreting these systems into collections of ``computable functionals". First and most importantly, by showing consistency of Heyting arithmetic using his system $T$ of functionals. 


 In recent years, proof theoretic transformations (also called proof interpretations) based on Gödel's Dialectica interpretation~\cite{goedel1986} have been used systematically  by Kohlenbach and others~\cite{Kohlenbach2008} to extract new content from proofs, following Kreisel's suggestion. Thus, the Dialectica interpretation has found relevant applications in several areas of mathematics and computer science. 
 Meanwhile, several authors have  explained the Dialectica interpretation in categorical terms. In particular, de Paiva~\cite{dePaiva1989dialectica} introduced the notion of a \emph{Dialectica category} as an internal version of Gödel's Dialectica Interpretation. The idea is to construct a category $\Dial(\mathsf{C})$ from a category $\mathsf{C}$ with finite limits.  The main focus in de Paiva's original work is on the categorical structure of the category obtained, as this notion 
 of  Dialectica category turns out to be also a model of Girard's Linear Logic~\cite{girard1987}.

This construction was first generalised by Hyland, who investigated the Dialectica construction associated to a fibred preorder \cite{Hyland2002}. Later,  Biering in her PhD work \cite{Biering_dialecticainterpretations}  studied the Dialectica construction for an arbitrary cloven fibration.  
 Hofstra~\cite{hofstra2011} then wrote an exposition and interpretation of the Dialectica construction from a modern categorical perspective, emphasising its universal properties. His work gives  centre stage to the well-known concepts of pseudo-monads, simple products and co-products.

Taking advantage of the abstract presentation of Hofstra, in previous works \cite{trotta_et_al:LIPIcs.MFCS.2021.87,trotta-lfcs2022} we introduced an \emph{intrinsic} presentation of the Dialectica construction via the notion of Gödel fibration and its proof-irrelevant version, namely Gödel doctrines. 
The key idea is that Gödel fibrations (and doctrines) can be thought of as fibrations generated by some basic elements playing the role of \emph{quantifier-free elements}. 
This categorification of quantifier-free elements is crucial not only to show that the notions of Gödel fibrations introduced in \cite{trotta_et_al:LIPIcs.MFCS.2021.87} and Dialectica fibrations (as presented in \cite{hofstra2011}) are mathematically equivalent in the appropriate way, but also to show how Gödel doctrines embody the main logical features of the Dialectica Interpretation \cite{trotta-lfcs2022,dialecticaprinciples2022}.
While in \cite{trotta_et_al:LIPIcs.MFCS.2021.87} we presented a reconstruction of where the categorification of concepts came from, in \cite{dialecticaprinciples2022} we showed that this categorification worked not only for rules as in \cite{trotta-lfcs2022}, but also for the logical principles themselves, which is always more exciting for logicians than for category theorists.

The main purpose of the work here is to provide a self-contained and complete study of Gödel doctrines, presenting in detail the results developed in \cite{trotta-lfcs2022,dialecticaprinciples2022}, their connections with \cite{trotta_et_al:LIPIcs.MFCS.2021.87}, and carrying on the analysis of these doctrines. 
In this paper, we  present in detail the notion of \emph{doctrine} and its logical meaning first, and then we discuss the Dialectica interpretation of the implicational connective and the logical principles involved in the categorical interpretation of this connective. Moreover, in order to present our results with a notation familiar to both logicians and category theorists,  we will employ the \emph{internal language} of a doctrine \cite{pitts95}.

As far as the notion of doctrine is concerned, we follow the notation and the definitions presented in \cite{maiettirosolini13a,maiettirosolini13b}, where the authors introduce primary, elementary and existential doctrines as generalisations of the original notion of Lawvere's hyperdoctrine~\cite{lawvere1970}. 

After recalling the main categorical tools involved, we are going to present a (hyper)doctrine characterisation of the Dialectica interpretation which corresponds exactly to its logical description. The \emph{soundness} of the interpretation of the implication connective, as expounded on by Spector and Troelstra \cite{Troelstra1973}, in the categorical models will follow as a direct consequence of this tight correspondence. In particular, recall that such an interpretation is motivated by the equivalence: 
 \begin{center} $(\exists u.\forall x . \psi_D (u,x)\to\exists v.\forall y.  \phi_D (v,y)) \leftrightarrow$
 
 $\leftrightarrow\exists f_0,f_1.\forall u,y. (\psi_D(u,f_1(u,y))\rightarrow \phi_D (f_0(u),y))$
\end{center} where $\psi_D$ and $\phi_D$ are quantifier-free formulae.  Showing this equivalence requires extra logical principles, going beyond intuitionistic logic, specifically Markov Principle (MP) and the Independence of Premise (IP) principle, as well as some choice.  
 While the traditional categorical approach takes this equivalence as the starting point for defining categorical models, for example Dialectica categories \cite{dePaiva1989dialectica}, our approach focuses instead on abstracting in the setting of doctrines the key logical features that allow us to conclude such an equivalence.
 
 We show how these key logical features are satisfied in the categorical setting, establishing a tight correspondence between the logical system and the categorical framework. Our results are built on the categorical presentation of existential and universal-free elements we introduced first in the context of fibrations in \cite{trotta_et_al:LIPIcs.MFCS.2021.87}, and then in the language of doctrines~\cite{trotta-lfcs2022}.
 %
 Having a categorification of such notions is fundamental to properly state logical principles in categorical terms, since both (IP) and (MP) involve quantifier-free formulae.
 Finally, to complete our analysis of Gödel (hyper)doctrines, we characterise categories obtained as results of the tripos-to-topos  construction of Hyland, Johnstone and Pitts~\cite{hyland89} applied to these doctrines. After recalling the notions and the construction of the category of predicates associated to a hyperdoctrine from \cite{maiettirosolini13a},  and of exact completion of a lex category \cite{CARBONI199879}, we present an explicit characterisation of the tripos-to-topos construction associated to a Gödel hyperdoctrine. In particular, combining our results with the characterisation of exact completions presented in \cite{trottamaietti2020}, we show that every category obtained as tripos-to-topos of a Gödel hyperdoctrine can be equivalently presented as the exact completion of the category of predicates associated to the hyperdoctrine itself. We then conclude with a very brief discussion of other work we envisaged following from our characterisation.


\section{Doctrines}
One of the most relevant notions of categorical logic which enabled the study of logic from a pure algebraic perspective is that of a \emph{hyperdoctrine},  introduced in a series of seminal papers by F.W. Lawvere to synthesise the structural properties of logical systems \cite{lawvere1969,lawvere1969b,lawvere1970}.
Lawvere’s crucial intuition was to consider logical languages and theories as fibrations to study their 2-categorical properties, so that e.g. connectives, quantifiers and equality are determined by structural adjunctions. If theories and models can be viewed as objects and morphisms of a suitable category, i.e. the category of hyperdoctrines, this is in particular a 2-category, where 2-cells represent morphisms of models. So, having a 2-categorical structure allows us not only to compare theories (objects) via models (1-cells), but also to compare models (1-cells) via the 2-cells that represent morphisms of models.



Recall from \cite{pitts95} that a \textbf{first-order hyperdoctrine} is a contravariant functor:
\[\hyperdoctrine{\mC}{P}\]
from a cartesian category $\mathcal{C}$ to the category of Heyting algebras ${\mathbf{Hey}}$ satisfying:
for every arrow $\arrow{A}{f}{B}$ in $\mathcal{C}$, the homomorphism $\fibration{P(B)}{P_f}{P(A)}$ of Heyting algebras, where $P_f$ denotes the action of the functor $P$ on the arrow $f$, has a left adjoint $\exists_f$ and a right adjoint $\forall_f$.
These adjoints satisfy the Beck-Chevalley conditions (BC), i.e. for any pullback square:
\[\xymatrix@+1pc{
D\ar[d]_h\ar[r]^k & C \ar[d]^g\\
A \ar[r]_f & B
}\]
it is the case that the squares: \begin{center}$\xymatrix@+1pc{
PD\ar[r]^{\exists_k} & PC \\
PA\ar[u]^{P_h} \ar[r]_{\exists_f} & PB \ar[u]_{P_g}
}$ \,\, and \,\, $\xymatrix@+1pc{
PD\ar[r]^{\forall_k} & PC \\
PA\ar[u]^{P_h} \ar[r]_{\forall_f} & PB \ar[u]_{P_g}
}$\end{center} commute, i.e. the equalities:

\begin{center}$\exists_kP_h=P_g \exists_f$ \,\, and \,\, $\forall_kP_h=P_g \forall_f$\end{center} hold. 

Observe that in fact the pointwise inequalities $\exists_{k}P_{h}\leq P_g \exists_{f}$ and $\forall_{k}P_{h}\geq P_g \forall_{k}$ always hold from the adjunctions definitions. The purpose of the Beck-Chevalley conditions is to guarantee that substitution  commutes with quantification, appropriately, thus BC forces the equality in both diagrams.


A first-order hyperdoctrine determines an appropriate categorical structure to abstract a first-order theory and its corresponding Tarski semantics. Semantically, a first-order hyperdoctrine is essentially a generalisation of the contravariant \emph{powerset functor} on the category of sets: $$\hyperdoctrine{\set}{\mathcal{P}}$$ 
sending a set $A$ into the Heyting algebra $\mathcal{P}(A)$ of its subsets (ordered by  inclusion), and a set-theoretic function $\arrow{A}{f}{B}$ to the inverse image functor $\arrow{\mathcal{P}B}{\mathcal{P}f=f^{-1}}{\mathcal{P}A}$. 
In this case, the adjoints $\forall_f$ and $\exists_f $ must be evaluated, on a subset $D$ of $A$, respectively as the subsets 
$ \exists_f(D)=\{ a\in B\; | \; \exists a\in A. \; (b=f(a) \wedge a\in D) \}$ and $ \forall_f(D)=\{ a\in B\; | \; \forall a\in A. \; (b=f(a) \Rightarrow a\in D) \}$.

From a syntactic point of view, a first-order hyperdoctrine can be seen as a generalisation of the \emph{Lindenbaum-Tarski algebra} of well-formed formulae of a first order theory. In particular, given a first-order theory $\textsc{T}$ in a many-sorted first-order language $\mathcal{L}$, one can consider the functor:
$$\hyperdoctrine{\mV}{\mathcal{L}\textsc{T}}$$ 
whose base category $\mathcal{V}$ is the \emph{syntactic} category of $\mathcal{L}$, i.e. the one whose objects are  ($\alpha$-equivalence classes of) finite lists $\overrightarrow{x}:=[x_1:X_1,\dots,x_n:X_n]$ of typed variables and whose morphisms are lists of substitutions, while the elements of  $\mathcal{L}\textsc{T}(\overrightarrow{x})$ are given by equivalence classes (with respect to provable reciprocal
consequence $\dashv\vdash $) of well-formed formulae in the context $\overrightarrow{x}$, and order is given by the provable consequences, according to the fixed theory $\textsc{T}$. In this case, the left adjoint to the weakening functor $\mathcal{L}\textsc{T}_{\pr}$ is computed by existentially quantifying the variables that are not involved in the substitution induced by the projection (dually the right adjoint is computed by quantifying universally).

\subsection{Existential and universal doctrines}
Recently, several generalisations of the notion of a Lawvere hyperdoctrine were considered, and we refer for example to \cite{maiettipasqualirosolini,maiettirosolini13a,maiettirosolini13b} or to  \cite{pitts02,hyland89} for higher-order versions. 
For further insights and applications to higher-order logic or realisability, we refer to \cite{hyland89,van_Oosten_realizability,pitts02}.

In this work we consider a natural generalisation of the notion of first-order hyperdoctrine, and we call it simply a \emph{doctrine}.
\begin{definition}
A \textbf{doctrine} is a contravariant functor:
$$\doctrine{\mC}{P}$$ 
where the category $\mathcal{C}$ has finite products and $\pos$ is the category of posets.
\end{definition}

Now we recall from \cite{maiettipasqualirosolini,maiettirosolini13a,trotta2020} the notions of existential and universal doctrines.

\begin{definition}\label{def existential doctrine}
A doctrine $\doctrine{\mC}{P}$ is \textbf{existential} (resp. \textbf{universal}) if, for every $A_1$ and $A_2$ in $\mathcal{C}$ and every projection $\arrow{A_1\times A_2}{{\pr_i}}{A_i}$, $i=1,2$, the functor:
$$ {PA_i}\xrightarrow{{P_{\pr_i}}}{P(A_1\times A_2)}$$
has a left adjoint $\exists_{\pr_i}$ (resp. a right adjoint $\forall_{\pr_i}$), and these satisfy the Beck-Chevalley condition BC: for any pullback diagram:
$$
\quadratocomm{X'}{A'}{X}{A}{{\pr'}}{f'}{f}{{\pr}}
$$
where $\pr$ and $\pr'$ are projections, and for any $\beta$ in $P(X)$ the equality:
$$\exists_{\pr'}P_{f'}\beta= P_f \exists_{\pr}\beta \,\,\,\, \textnormal{    (  resp. }\forall_{\pr'}P_{f'}\beta= P_f \forall_{\pr}\beta\textnormal{  )}$$ holds. \end{definition} \noindent Observe that the inequality $\exists_{\pr'}P_{f'}\beta\leq P_f \exists_{\pr}\beta \textnormal{    (  resp. }\forall_{\pr'}P_{f'}\beta\geq P_f \forall_{\pr}\beta\textnormal{  )}$ of BC in Definition \ref{def existential doctrine} always holds.

We conclude the current subsection recalling from \cite{maiettipasqualirosolini,maiettirosolini13a,maiettirosolini13b} that doctrines form a 2-category $\CatDoc$ where:
\begin{itemize}
\item a \textbf{1-cell} is a pair $(F,\mathfrak{b})$:
$$\twomorphism{\mC}{P}{F}{\mathfrak{b}}{\mD}{R}$$
such that $\arrow{\mC}{F}{\mD}$ is a finite product preserving functor between doctrines $P, R$, and: $$\arrow{P}{\mathfrak{b}}{R F^{\op}}$$ is a natural transformation;
\item a \textbf{2-cell} $(F,\mathfrak{b})\xrightarrow{\theta}(G,\mathfrak{c})$ is a natural transformation $\arrow{F}{\theta}{G}$ such that for every $A$ in $\mC$ and every $\alpha$ in $P(A)$, we have: 
$$\mathfrak{b}_A(\alpha)\leq R_{\theta_A}(\mathfrak{c}_A(\alpha)).$$
\end{itemize} 
We denote as $\ED$ the 2-full subcategory of $\CatDoc$ whose elements are existential doctrines, and whose 1-cells are those 1-cells of $\CatDoc$ which preserve the existential structure. Similarly, we  denote by $\UD$ the 2-full subcategory of $\CatDoc$ whose elements are universal doctrines, and whose 1-cells are those 1-cells of $\CatDoc$ which preserve the universal structure.

From a logical perspective, the intuition is that a 1-cell between doctrines is a generalisation of the notion of set-theoretic model, whereas 2-cells represent morphisms of models. 

\subsection{Internal language of doctrines}
Over the years, category theory and categorical logic have evolved a characteristic form of proof by diagram chasing to establish properties expressible in category theoretic terms. However, in complex cases such arguments can be difficult to construct and hard to follow because of the rather limited forms of expression of a purely category-theoretic language. 

Categorical logic enables the use of richer and more familiar forms of expression meant to establish properties of particular kinds of categories. Indeed, one can define a suitable notion of \emph{internal language}, naming the relevant constituents of the category and then applying a categorical semantics to turn assertions of this language (according to a suitable logic) into categorical statements. Such a procedure has become highly developed in the theory of toposes where the internal language of a topos coupled with the semantics of intuitionistic higher order logic in toposes  enables one to reason about the objects and morphisms of a topos as if they were sets and functions. The notion of internal language is not just a useful instrument to simplify the notation, but it is a powerful instrument that allows us to formally prove a categorical equivalence between doctrines and logical theories.

First, we briefly recall that theories in a given (possibly many-sorted) language over a fragment of first-order logic induce doctrines. Let us assume that we are given a fragment $F$ of first-order logic. Whenever $\mathcal{L}$ is a (possibly) many sorted $F$-language and $\textsc{T}$ is an $\mathcal{L}$-theory, we can define a doctrine $\mathcal{L}\textsc{T}$ over the syntactic category $\mathcal{V}$ associated to $\mathcal{L}$, as described in Section 2. Whenever $P$ is a doctrine over some category $\mathcal{C}$, then $P$ can host \textit{models} of $\textsc{T}$ according to a natural generalisation of Tarski's semantics, which is sound and complete (for it admits the syntactic model) and is formally defined as classic Tarski's semantics for (a fragment of) first-order logic. In fact, a $\mathcal{L}$-structure $S$ in $P$ consists of:

\begin{itemize}
    \item an object of $\mathcal{C}$ for every $\mathcal{L}$-sort;
    \item an arrow of $\mathcal{C}$ (between the appropriate $S$-interpretations of the sorts) for any $\mathcal{L}$-function symbol;
    \item for every $\mathcal{L}$-predicate symbol in some context, an element of the $P$-fibre of the $S$-interpretation of that context.
\end{itemize} 
Then (terms and) formulas are inductively interpreted in $S$ as usual (formally as for traditional Tarski's semantics) and we say that $S$ \textbf{models} some $\mathcal{L}$-sequent $\phi \vdash \psi$ in some given context when it is the case that: $$\phi^S \leq \psi^S$$ in the $P$-fibre of the $S$-interpretation of the given context (here $\alpha^S$ denotes the $S$-interpretation of some formula $\alpha$). According to this notion of semantics, it is the case that the $P$-models of $\textsc{T}$, together with the model morphisms between them, are bijectively (equivalently) induced by the $1$-cells $\mathcal{L}\textsc{T}\to P$ of $\CatDoc$ and the $2$-cells between them, respectively. Thus, the identity over $\mathcal{L}\textsc{T}$ induces the syntactic model of $\textsc{T}$.

Conversely, a language can be defined for a given doctrine $\doctrine{\mC}{P}$. 
%
One can associate to each doctrine $\doctrine{\mC}{P}$ a language $\mathcal{L}_P$ having a sort $A$ for each object $A$ of the base category $\mC$,  an $n$-ary function symbol $\arrow{A_1,\dots,A_n}{f}{A}$ for every morphism $\arrow{A_1\times\dots\times A_n}{f}{A}$ of $\mC$ and an $n$-relation symbol $R \colon A_1,\dots,A_n$ for each element of $P(A_1\times\dots\times A_n)$, all of this for each finite list of objects $A_1,\dots,A_n$ of $\mathcal{C}$ and every object $A$ of $\mC$. The language $\mathcal{L}_P$ is called the \textbf{internal language} of the doctrine $P$. Let $\textsc{T}_P$ be the theory whose sequents $\phi \vdash \psi$ in some context $A$ are precisely those ones such that $\phi \leq \psi$ in $P(A)$. The doctrine $\mathcal{L}_P\textsc{T}_P$ is equivalent to $P$ in $\CatDoc$ and the assignment $P \mapsto (\mathcal{L}_P, \textsc{T}_P)$ extends to a pseudo 2-inverse to the 2-functor $(\mathcal{L},\textsc{T})\mapsto \mathcal{L}\textsc{T}$, in such a way that $\CatDoc$ is equivalent to the theories in some language over $F$ together with the models (of one of them into the other one) and the model morphisms (between them).

Whenever $\doctrine{\mC}{P}$ is a doctrine, we know that $\mathcal{L}_P\textsc{T}_P$ is equivalent to $P$ in $\CatDoc$. Therefore, the doctrine $P$, together with the equivalence $\mathcal{L}_P\textsc{T}_P\to P$, constitutes the syntactic model of its own theory $\textsc{T}_P$ in its own internal language $\mathcal{L}_P$. This fact means that, whenever $\phi$ and $\psi$ are elements of $P(A)$, for some object $A$ of $\mathcal{C}$, it is the case that $\phi \leq \psi$ precisely when $\phi \vdash \psi$ is a sequent of $\textsc{T}_P$ in context $A$. This is precisely why we can deduce properties of $P$ through a purely syntactical procedure: every $\mathcal{L}_P$-sequent corresponds to a categorical statement or a condition involving $P$, and  this is true precisely when that sequent belongs to $\textsc{T}_P$. 

Taking advantage of these equivalent ways of reasoning about doctrines and logic, we define the following notation for this logical syntax, 
which we use extensively in this paper.  We write in the internal language of a doctrine:
\[ a_1: A_1,\dots,a_n:A_n \; | \; \phi(a_1,\dots,a_n)\vdash \psi(a_1,\dots,a_n) \]
instead of:
\[\phi\leq \psi\]
in the fibre $P(A_1\times \cdots\times A_n)$. Similarly, we write:
\[a:A\; |\; \phi(a)\vdash \exists b : B .\psi (a,b) \text{ and } a:A\; |\; \phi(a)\vdash \forall b : B . \psi (a,b) \]
in place of:
\[\phi\leq \exists_{\pr_A} \psi \text{ and } \phi\leq \forall_{\pr_A} \psi \]
in the fibre $P(A)$, where $\pi_A$ is the projection $A\times B \to A$. In fact, if a doctrine $\doctrine{\mC}{P}$ is existential and $\alpha   \in P(A\times B)$ is a formula-in-context: $$a: A,b: B \; |\; \alpha (a,b)$$ then $\exists_{\pi_A}\alpha \in PA$ represents the formula $a: A\;|\; \exists b: B . \alpha(a,b)$ in context $A$. Analogously, if the doctrine $P$ is universal, then $\forall_{\pi_A}\alpha \in PA$ represents the formula $a: A \; |\; \forall b: B .\alpha(a,b)$ in context $A$. 
This interpretation is sound and complete for the usual reasons: this is how usual Tarski semantics can be characterised in terms of categorical properties of the powerset functor $\doctrine{\set}{\mathcal{P}}$.

Also, we write $a:A \; |\; \phi \dashv\vdash \psi$ to abbreviate $a:A \; |\; \phi \vdash \psi$ and $a:A \; |\; \psi \vdash \phi$ and when the type of a quantified variable is clear from the context, we will omit that type for sake of readability. Finally, substitutions via given terms (i.e. reindexings and weakenings) are modelled by pulling back along those given terms. Applications of propositional connectives are interpreted by using the corresponding operations in the fibres of the given doctrine.

\section{Quantifier-free elements and G\"odel doctrines}
One of the fundamental notions of logic and proof theory is the notion of \emph{quantifier-free formula}, and there are countless results built on the possibility of detecting quantifier-free formulae in the literature.  For example, in the Dialectica interpretation, this notion is present at every stage, and we could say the entire translation depends on the fact that, syntactically, we can identify and distinguish formulae with no occurrences of quantifiers.

However, while from a syntactic perspective it is effortless and natural to speak of quantifier-free formulae, abstracting this notion algebraically is not so obvious.
The main problem is that the property of being quantifier-free is totally syntactic, not involving any other entity different from the formula itself we are considering. It does not depend, for example, on the fact that we are working in classical, constructive or intuitionistic logic. It only depends on how a formula is written in a given formal language.

Therefore, if we want to provide a complete categorical presentation of the Dialectica interpretation, capable of representing all its logical details, we have to deal with the problem of representing quantifier-free formulae and find a suitable \emph{universal property} to represent predicates that are quantifier-free categorically.
Notice that quantifier-free elements may satisfy different properties depending on the logical system we are considering, hence if we want to represent these elements via universal properties, we have to relativise this notion to a give system, that is the Dialectica interpretation in our case.

\subsection{Existential and universal free elements}
We discuss a notion to identify those predicates of an existential doctrine: $$\fibration{\mathcal{C}^{\op}}{P}{\pos}$$ which are \emph{free from left-adjoints} $\exists_{\pr}$, and then dualise this notion to define those predicates that are \emph{free from right-adjoints} $\forall_{\pr}$. This idea was originally introduced by Trotta and Maietti \cite{trottamaietti2020} and, independently, by Frey in  \cite{Frey2020}. It was further developed and generalised to the fibrational setting in \cite{trotta_et_al:LIPIcs.MFCS.2021.87}.

\begin{definition}\label{definition (P,lambda)-atomic0}
Let $\fibration{\mathcal{C}^{\op}}{P}{\pos}$ be an existential doctrine and let $A$ be an object of $\mathcal{C}$. A predicate $\alpha$ of the fibre $P(A)$ is said to be an  \textbf{existential splitting} if it satisfies the following \emph{weak} universal property: for every predicate $\beta\in P(A\times B)$ such that $\alpha(a) \vdash \exists b \colon B.\beta(a,b)$ (i.e. $\alpha\leq \exists_{\pr_A}(\beta)$ in category-theoretic notation, where $\arrow{A\times B}{\pr_A}{A}$ is a product projection of $\mathcal{C}$), there exists an arrow $\arrow{A}{g}{B}$ such that: $$\alpha(a)\vdash \beta(a,g(a)) \textnormal{  \, \, ( i.e. } \alpha \leq P_{\angbr{1_A}{g}}(\beta) \textnormal{ in category-theoretic notation ).}$$
\end{definition} Existential splittings stable under re-indexing are called \emph{existential-free elements}. Thus we introduce the following definition:
\begin{definition}\label{definition (P,lambda)-atomic}
Let $\fibration{\mathcal{C}^{\op}}{P}{\pos}$ be an existential doctrine and let $I$ be an object of $\mathcal{C}$. A predicate $\alpha$ of the fibre $P(I)$ is said to be  \textbf{existential-free} if $P_f(\alpha)$ is an existential splitting for every morphism $\arrow{A}{f}{I}$.
\end{definition}

Employing the presentation of doctrines via internal language,  we say that $i:I\; |\; \alpha(i)$ is free from the existential quantifier if, whenever $a:A \;|\; \alpha(f(a))\vdash \exists b: B.\beta (a,b)$ for some term $a:A\;|\; f(a): I$, then there is a term $a:A\; |\; g(a):B$ such that $a:A\; |\; \alpha(f(a))\vdash \beta(a,g(a))$.

Observe that we always have that $a:A\; |\; \beta(a,g(a))\vdash \exists b: B.\beta (a,b)$, in other words $  P_{\langle 1_A,g\rangle}\beta\leq \exists_{\pi_A}\beta$. In fact, it is the case that $\beta \leq P_{\pi_A}\exists_{\pi_A}\beta$ (as this arrow of $P(A \times B)$ is nothing but the unit of the adjunction $\exists_{\pi_A}\dashv P_{\pi_A}$), hence a re-indexing by the term $\langle 1_A,g\rangle$ yields the desired inequality. Therefore, the property that we  require for $i:I\; |\; \alpha(i)$ turns out to be the following: whenever there are proofs of $\exists b: B.\beta(a,b)$ from $\alpha(f(a))$, at least one of them factors through the canonical proof of $\exists b: B.\beta (a,b)$ from $\beta(a,g(a))$ for some term $a:A\; |\; g(a) : B$. 

Requiring the stability under substitution as in Definition \ref{definition (P,lambda)-atomic} is motivated by the fact that, in logic, if a formula is existential-free, and we apply a substitution to this formula, then we obtain again an existential-free formula.
\begin{definition}\label{definition sud-doctrine of existential free elements}
Let $\doctrine{\mC}{P}$ be an existential doctrine. Then we indicate by $\doctrine{\mC}{\exfreedoctrine{P}}$ the subdoctrine of $P$ whose elements of the fibres $\exfreedoctrine{P}(A)$ are existential-free elements of $P(A)$.
\end{definition}

Dualising the previous Definitions \ref{definition (P,lambda)-atomic0} and \ref{definition (P,lambda)-atomic} we get the corresponding ones for the universal quantifier.

\begin{definition}\label{definition (P,lambda)-coatomic0}
Let $\fibration{\mathcal{C}^{\op}}{P}{\pos}$ be a universal doctrine and let $A$ be an object of $\mathcal{C}$. A predicate $\alpha$ of the fibre $P(A)$ is said to be a  \textbf{universal splitting} if it satisfies the following weak universal property: for every predicate $\beta\in P(A\times B)$ such that $\forall b \colon  B.\beta(a,b) \vdash \alpha(a)$, there exists an arrow $\arrow{A}{g}{B}$ such that:
\[    \beta(a,g(a))\vdash \alpha(a). \]
\end{definition}

\begin{definition}\label{definition (P,lambda)-coatomic}
Let $\fibration{\mathcal{C}^{\op}}{P}{\pos}$ be a universal doctrine and let $I$ be an object of $\mathcal{C}$. A predicate $\alpha$ of the fibre $P(I)$ is said to be \textbf{universal-free} if $P_f(\alpha)$ is a universal splitting for every morphism $\arrow{A}{f}{I}$.
\end{definition}

Again, employing the presentation of a doctrine via its internal language, the property we require of the formula $i:I\; |\; \alpha(i)$, so that it is free from universal quantifiers, is that, whenever $a:A\; | \; \forall b: B .\beta (a,b)\vdash \alpha(f(a))$ for some term $a:A \; |\; f(a):I$, then there is a term $a:A\; |\; g(a):B$  such that $a:A\;| \; \beta(a,g(a))\vdash \alpha(f(a))$.


\begin{definition}
Let $\fibration{\mathcal{C}^{\op}}{P}{\pos}$ be an existential doctrine. We say that $P$ has \textbf{enough existential-free predicates} if, for every object $I$ of $\mathcal{C}$ and every predicate $\alpha\in P(I)$, there exist an object $A$ and an existential-free object $\beta$ in $P(I \times A)$ such that $\alpha(i) \dashv \vdash \exists i \colon I.\beta(i,a)$ (i.e.
$\alpha=\exists_{\pr_I}\beta$).
\end{definition}
Analogously, we have the following definition for universal doctrines.
\begin{definition}
Let $\fibration{\mathcal{C}^{\op}}{P}{\pos}$ be a universal doctrine. We say that $P$ has \textbf{enough universal-free predicates} if, for every object $I$ of $\mathcal{C}$ and every predicate $\alpha\in PI$, there exist an object $A$ and a universal-free object $\beta$ in $P(I \times A)$ such that
$\alpha(i) \dashv \vdash \forall i \colon I.\beta(i,a)$.
\end{definition}
\begin{definition}\label{definition sud-doctrine of universal free elements}
Let $\doctrine{\mC}{P}$ be an universal doctrine. Then we indicate by $\doctrine{\mC}{\unfreedoctrine{P}}$ the subdoctrine of $P$ whose elements of the fibres $\unfreedoctrine{P}(A)$ are universal-free element of $P(A)$.
\end{definition}
\subsection{Skolem and G\"odel doctrines}
Building over  the notions corresponding to  quantifier-free elements in doctrines we introduced in the previous section, we now define two particular kinds of doctrines, called \emph{Skolem doctrines} and \emph{G\"odel doctrines}. 

The \emph{Skolem doctrine} are so called because these doctrines satisfy a version of the traditional principle of \emph{Skolemisation}, namely $\forall u \exists x \alpha(u,x)\rightarrow \exists f \forall u \alpha (u, fu)$.
The name \emph{G\"odel doctrine} is chosen because we will prove that these doctrines encapsulates in a pure form the basic mathematical features of the Dialectica interpretation.

\begin{definition}\label{definition skolem doctrine}
 A doctrine $\doctrine{\mC}{P}$ is called a \textbf{Skolem doctrine} if:
 \begin{itemize}
     \item[(i)] the category $\mC$ is cartesian closed;
     \item[(ii)] the doctrine $P$ is existential and universal;
     \item[(iii)] the doctrine $P$ has enough existential-free predicates;
     \item[(iv)] the existential-free objects of $P$ are stable under universal quantification, i.e. if $\alpha\in P(A)$ is existential-free, then $\forall_{\pr}(\alpha)$ is existential-free for every projection $\pr$ from $A$.
 \end{itemize}
 \end{definition}
\begin{remark}
 The last point $(iv)$ of Definition \ref{definition skolem doctrine} implies that, given a Skolem doctrine $\doctrine{\mC}{P}$, the sub-doctrine $\doctrine{\mC}{\exfreedoctrine{P}}$ of existential-free predicates of $P$ as defined in \ref{definition sud-doctrine of existential free elements} is a universal doctrine. From a purely logical perspective, requiring existential-free elements to be stable under universal quantification is quite natural since this can be also read as \emph{if} $\alpha (x)$ \emph{is an existential-free formula, then $\forall x. \alpha (x)$ is again an existential-free formula}.
\end{remark}

\begin{proposition}[Skolemisation]\label{proposition skolemisation}
Every Skolem doctrine $\doctrine{\mC}{P}$ validates the Skolemisation principle:
$$ a: A \; | \; \forall b : B. \exists c : C. \alpha(a,b,c)\dashv\vdash \exists f: C^B. \forall b : B. \alpha (a,b,\textnormal{ev}(f,b))$$ 
where $\alpha$ is any predicate in $P(A\times B\times C)$.
\end{proposition}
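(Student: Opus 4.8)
The plan is to prove the two entailments of $\dashv\vdash$ separately, working in the internal language of $P$ and converting the splitting property and the fibred adjunctions into the required inequalities in $P(A)$. Throughout, write $C^B$ for the exponential in the cartesian closed base $\mC$ (using (i)), write $\ev\colon C^B\times B\to C$ for evaluation, and let $h\colon A\times C^B\times B\to A\times B\times C$ be the map $(a,f,b)\mapsto(a,b,\ev(f,b))$, so that the right-hand side of the statement is $\exists_{\pi_A}\forall_{\pi}P_h\alpha$, with the two projections forgetting $b$ and then $f$.

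The right-to-left entailment $\exists f.\forall b.\alpha(a,b,\ev(f,b))\vdash\forall b.\exists c.\alpha(a,b,c)$ is the routine direction. First I would record that $\ev(f,b)$ is a particular witness for the inner existential, i.e. $P_h\alpha\leq P_{\pi}\exists_{\pi}\alpha$: this follows by reindexing the unit $\alpha\leq P_\pi\exists_\pi\alpha$ along $h$, since $\pi\circ h$ is exactly the projection forgetting $f$. Applying $\forall b$ and then commuting it past the reindexing that forgets $f$ by the Beck--Chevalley condition (available because $P$ is universal), and finally applying $\exists f$ together with the counit of $\exists_{\pi_A}\dashv P_{\pi_A}$, yields the claimed entailment. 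This direction uses only the existential/universal structure (ii) and its Beck--Chevalley conditions.

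The left-to-right entailment is the crux and is where the defining hypotheses of a Skolem doctrine do the work. Set $\gamma(a):=\forall b.\exists c.\alpha(a,b,c)\in P(A)$. Because $P$ has enough existential-free predicates (iii), I would resolve $\gamma$ as $\gamma(a)\dashv\vdash\exists w:W.\mu(a,w)$ with $\mu\in P(A\times W)$ existential-free. This decomposition gives $\mu(a,w)\vdash\gamma(a)$ (the unit of $\exists_{\pi_A}\dashv P_{\pi_A}$), and composing with the counit $\gamma(a)\vdash\exists c.\alpha(a,b,c)$ of the universal quantifier yields, in context $a:A,w:W,b:B$, the entailment $\mu(a,w)\vdash\exists c.\alpha(a,b,c)$. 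Now $\mu$ is existential-free, so its reindexing to the context $A\times W\times B$ is an existential splitting; applying that splitting property extracts a term $a:A,w:W,b:B\mid g(a,w,b):C$ with $\mu(a,w)\vdash\alpha(a,b,g(a,w,b))$. Currying $g$ over $b$ by cartesian closure gives $\hat g\colon A\times W\to C^B$ with $\ev(\hat g(a,w),b)=g(a,w,b)$, hence $\mu(a,w)\vdash\alpha(a,b,\ev(\hat g(a,w),b))$. As $\mu(a,w)$ does not depend on $b$, the adjunction $P_\pi\dashv\forall_\pi$ upgrades this to $\mu(a,w)\vdash\forall b.\alpha(a,b,\ev(\hat g(a,w),b))$; instantiating the outer existential with the witness $\hat g(a,w)$ gives $\mu(a,w)\vdash\exists f:C^B.\forall b.\alpha(a,b,\ev(f,b))$. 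Since the right-hand side no longer mentions $w$, applying $\exists w$ with the counit of $\exists\dashv P$ and the equivalence $\exists w.\mu(a,w)\dashv\vdash\gamma(a)$ discharges $w$ and delivers $\gamma(a)\vdash\exists f:C^B.\forall b.\alpha(a,b,\ev(f,b))$, as required.

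The main obstacle, and the conceptual heart of the proof, is exactly that the premise to which the splitting property is applied must be existential-free, whereas the natural candidate $\gamma=\forall b.\exists c.\alpha$ is not (it carries a leading universal over an existential). Resolving $\gamma$ into an existential quantification $\exists w.\mu$ of an existential-free predicate via (iii) is what converts the whole of Skolemisation into a single legitimate use of the choice-like splitting property, while cartesian closure (i) packages the extracted family of witnesses into the required function $\hat g$. Once this is in place, the rest — the Beck--Chevalley rewrite in the easy direction and the unit/counit bookkeeping throughout — is routine.
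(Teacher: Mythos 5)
Your proof is correct and takes essentially the same route as the paper's: resolve the premise into an existential-free predicate using the ``enough existential-free predicates'' hypothesis, apply the splitting property to extract a witnessing term, curry it via cartesian closure of the base, and conclude by the quantifier adjunctions. The differences are only presentational --- the paper phrases your resolution $\gamma \dashv\vdash \exists w.\mu$ as a without-loss-of-generality reduction to an existential-free $\gamma$ and leaves the routine right-to-left entailment implicit, whereas you spell both out (including the Beck--Chevalley bookkeeping in the easy direction).
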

\begin{proof}

Let us assume that $a : A\;|\;\gamma(a) \vdash \forall b.\exists c.\alpha(a,b,c)$ for some predicate $\gamma \in P(A)$. By point (iv) of Definition \ref{definition skolem doctrine}, we assume without loss of generality that $\gamma(a)$ is existential-free: otherwise there is an existential-free predicate  $\gamma'$ covering $\gamma(a)$ and we get back to our hypothesis by using that $P$ is existential.

Since $P$ is universal, it is the case that $a : A, b : B\;|\;\gamma(a) \vdash \exists c.\alpha(a,b,c)$ and, being $\gamma(a)$ existential-free: $$a : A, b : B\;|\;\gamma(a) \vdash \alpha(a,b,g(a,b))$$ for some term in context $a : A, b : B \;|\; g(a,b) : C$. 
Being $\mathcal{C}$ cartesian closed, there is a context $f : C^{B}$ together with a term in context $f : C^{B}, b : B \; |\; \textnormal{ev}(f,b): C$ 
such that there is a unique term in context $a: A \;|\; h(a): C^{B}$ satisfying $a : A,b : B \;|\; g(a,b)=\textnormal{ev}(h(a),b): C$. Hence: $$a : A, b : B\;|\;\gamma(a) \vdash \alpha(a,b,\textnormal{ev}(h(a),b))$$ and  $P$ being universal, it is the case that: $$a : A\;|\;\gamma(a) \vdash \forall b.\alpha(a,b,\textnormal{ev}(h(a),b)).$$ 
Finally, since: $$a : A\;|\;\forall b.\alpha(a,b,\textnormal{ev}(h(a),b))\vdash \exists f. \forall b.\alpha(a,b,\textnormal{ev}(f,b))$$ (this holds for any predicate $\delta(a,-)$ in place of the predicate $\forall b.\alpha(a,b,\textnormal{ev}(-,b))$) we conclude that: $$a : A\;|\;\gamma(a) \vdash \exists f. \forall b.\alpha(a,b,\textnormal{ev}(f,b)).$$ 
We are done by taking $\forall b.\exists c.\alpha(a,b,c)$ as the predicate $\gamma(a)$.
\end{proof}

\begin{definition}\label{goedel}
A doctrine $\doctrine{\mC}{P}$ is called a \textbf{G\"odel doctrine} if:
\begin{enumerate}
    \item[(i)] $P$ is a Skolem doctrine;
    \item[(ii)] the sub-doctrine $\doctrine{\mathcal{C}}{\exfreedoctrine{P}}$ of the existential-free predicates of $P$ has enough universal-free predicates.
\end{enumerate}
\end{definition}

Now we have all the tools needed to introduce the notion of \emph{quantifier-free predicate} in the categorical setting of G\"odel doctrines.
\begin{definition}\label{definition quantifier-free element}
An element $\alpha$ of a fibre $P(A)$ of a G\"odel doctrine $P$ that is both an existential-free predicate of $P$ and a universal-free predicate in the sub-doctrine $\exfreedoctrine{P}$ of existential-free elements of $P$ is called a \textbf{quantifier-free predicate} of $P$. The sub-doctrine of quantifier-free elements is denoted by $\doctrine{\mC}{\quantfreedoctrine{P}}$.
\end{definition}
Therefore, given a G\"odel doctrine $\doctrine{\mC}{P}$, we have the following canonical inclusions of doctrines:

\[ \xymatrix@+1pc{
\quantfreedoctrine{P} \ar@{^{(}->}[r]^{\iota_1} &\exfreedoctrine{P} \ar@{^{(}->}[r]^{\iota_2} & P
}\]
where $\injarrow{\quantfreedoctrine{P} }{\iota_1}{\exfreedoctrine{P}}$ is a morphism of doctrines, while $\injarrow{\exfreedoctrine{P} }{\iota_2}{P}$ is a morphism of universal doctrines.
\begin{remark}
Notice that a universal-free element of the sub-doctrine $\exfreedoctrine{P}$ of a given G\"odel doctrine $P$, may not be a universal-free element in the whole doctrine $P$, because the universal property of being universal-free is relative only to the elements of $\exfreedoctrine{P}$. 
Therefore, the quantifier-free elements of $P$ as established in Definition \ref{definition quantifier-free element} \emph{are not the existential and universal free elements} of $P$. 
\end{remark}

In order to simplify the notation, but also to make clear the connection with the logical presentation in the Dialectica interpretation, for a given G\"odel doctrine $\doctrine{\mC}{P}$ we will use the notation $\alpha_D$ to indicate an element $\alpha$ of $\quantfreedoctrine{P}$, i.e. a quantifier-free predicate. 

\begin{theorem}\label{theorem prenex normal form}
Let $\doctrine{\mathcal{C}}{P}$ be a G\"odel doctrine, and let $\alpha$ be an element of $P(A)$. Then there exists a quantifier-free predicate $\alpha_D$ of $P(I\times U\times X)$ such that:
\[i:I\; |\; \alpha (i) \dashv\vdash \exists u: U.\forall x: X. \alpha_D (i,u,x).\]
\end{theorem}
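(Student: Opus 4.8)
The plan is to obtain the desired prenex form by one application of each of the two ``enough free predicates'' hypotheses that define a G\"odel doctrine, stacking an existential layer on top of a universal layer. First I would use that $P$, being a Skolem doctrine, has enough existential-free predicates (condition (iii) of Definition~\ref{definition skolem doctrine}). Applied to the given $\alpha \in P(I)$ (writing $I$ for the object $A$), this produces an object $U$ and an existential-free predicate $\beta \in \exfreedoctrine{P}(I \times U)$ with
\[ i : I \mid \alpha(i) \dashv\vdash \exists u : U.\, \beta(i,u). \]

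Next, since $\beta$ lives in the sub-doctrine $\exfreedoctrine{P}$ of existential-free predicates, and a G\"odel doctrine requires $\exfreedoctrine{P}$ to have enough universal-free predicates (condition (ii) of Definition~\ref{goedel}), I would apply that hypothesis to $\beta$. This yields an object $X$ and a predicate $\alpha_D \in \exfreedoctrine{P}(I \times U \times X)$ that is universal-free \emph{inside} $\exfreedoctrine{P}$, with
\[ i : I, u : U \mid \beta(i,u) \dashv\vdash \forall x : X.\, \alpha_D(i,u,x). \]
By construction $\alpha_D$ is existential-free in $P$ and universal-free in $\exfreedoctrine{P}$, hence quantifier-free in the sense of Definition~\ref{definition quantifier-free element}; this is exactly the predicate the theorem asks for.

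Finally I would stitch the two equivalences together: applying $\exists u : U$ (a monotone operation) to the second equivalence and composing with the first gives
\[ i : I \mid \alpha(i) \dashv\vdash \exists u : U.\, \beta(i,u) \dashv\vdash \exists u : U.\, \forall x : X.\, \alpha_D(i,u,x). \]

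The step that needs care — and the main (small) obstacle — is the very last reading of $\forall x.\, \alpha_D$: the universal quantifier produced in the second step is computed in the sub-doctrine $\exfreedoctrine{P}$, whereas the target formula $\exists u.\,\forall x.\,\alpha_D$ is meant to be evaluated in $P$ itself. These agree precisely because condition (iv) of Definition~\ref{definition skolem doctrine} makes $\exfreedoctrine{P}$ a universal doctrine whose inclusion $\iota_2 \colon \exfreedoctrine{P} \hookrightarrow P$ preserves the universal structure, so that $\forall$ taken in $\exfreedoctrine{P}$ coincides with $\forall_{\pr}$ taken in $P$ on existential-free predicates. Once this coherence is invoked, the displayed chain lives entirely in $P$ and the proof is complete.
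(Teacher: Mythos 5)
Your proof is correct and takes essentially the same route as the paper's: first decompose $\alpha$ using enough existential-free predicates to get $\alpha \dashv\vdash \exists u.\,\beta$, then decompose $\beta$ using the fact that $\exfreedoctrine{P}$ has enough universal-free predicates, yielding the quantifier-free $\alpha_D$ with $\alpha \dashv\vdash \exists u.\forall x.\,\alpha_D$. The only difference is that you explicitly verify the coherence point that the universal quantifier produced inside $\exfreedoctrine{P}$ agrees with $\forall_{\pr}$ in $P$ (via condition (iv) of Definition~\ref{definition skolem doctrine}), a check the paper's two-line proof leaves implicit.
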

\begin{proof}
By definition of a G\"odel doctrine, in particular since $P$ has enough-existential free objects, there exists an existential-free element $\beta\in P(A\times U)$ such that $i:I\; |\; \alpha (i) \dashv\vdash \exists u: U. \beta (i,u)$. Then, since the subdoctrine of existential-free elements has enough-universal free elements, we can conclude that there exists a quantifier-free predicate $\alpha_D$ of $P(I\times U\times X)$ such that $i:I\; |\; \alpha (i) \dashv\vdash \exists u: U.\forall x: X. \alpha_D (i,u,x)$.
\end{proof}
Theorem \ref{theorem prenex normal form} shows that in a G\"odel doctrine every formula admits a presentation of the precise form used in the Dialectica translation.

The next result establishes the precise connection between G\"odel doctrines and the Dialectica interpretation. 
Employing the properties of a G\"odel doctrine, we can provide a complete categorical description and presentation of the chain of equivalences involved in the Dialectica interpretation of implicational formulae. In particular, we  show that the crucial steps where (IP) and (MP) are applied are represented categorically via the notions of existential-free element and universal-free element.

\begin{theorem}\label{theorem principal 1}
Let $\doctrine{\mathcal{C}}{P}$ be a G\"odel doctrine. Then for every $\psi_D\in P(I\times U\times X )$ and $\phi_D\in P(I\times V\times Y)$ quantifier-free predicates of $P$ we have that:
\[ i:I\; |\; \exists u.\forall x . \psi_D (i,u,x)\vdash \exists v.\forall y.  \phi_D (i,v,y)\]
if and only if there exist $\arrow{I\times U}{f_0}{V}$ and $\arrow{I\times U\times Y}{f_1}{X}$ such that:
\[i:I,u:U,y:Y\; |\; \psi_D(i,u,f_1(i,u,y))\vdash \phi_D (i,f_0(i,u),y).\]

\end{theorem}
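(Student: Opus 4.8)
The plan is to prove the two implications separately: the reverse direction is a routine chain of adjunction transpositions, while the forward direction is the categorical incarnation of the Dialectica equivalence, in which the two witnesses $f_0$ and $f_1$ are produced respectively by the existential-free and the universal-free splitting properties. Throughout I would work in the internal language, using the prenex shapes $\forall x.\psi_D \in P(I\times U)$ and $\forall y.\phi_D \in P(I\times V)$, both of which are existential-free: indeed $\psi_D$ and $\phi_D$ are quantifier-free, hence existential-free, and point (iv) of Definition \ref{definition skolem doctrine} ensures that universal quantification preserves existential-freeness.

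For the forward direction I would start from the hypothesis and transpose the outer existential on the left across $\exists_{\pr}\dashv P_{\pr}$ to obtain
\[ i:I,u:U \; | \; \forall x.\psi_D(i,u,x)\vdash \exists v.\forall y.\phi_D(i,v,y). \]
Since the left-hand side is existential-free, it is in particular an existential splitting (Definitions \ref{definition (P,lambda)-atomic0}--\ref{definition (P,lambda)-atomic}). Rewriting the weakened right-hand side, via Beck-Chevalley, as $\exists_{\pr_{I\times U}}\gamma'$ with $\gamma'(i,u,v):=\forall y.\phi_D(i,v,y)$, the splitting property produces an arrow $\arrow{I\times U}{f_0}{V}$ with
\[ i:I,u:U \; | \; \forall x.\psi_D(i,u,x)\vdash \forall y.\phi_D(i,f_0(i,u),y), \]
which is the step corresponding to the Independence of Premise principle.

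Transposing the universal on the right across $P_{\pr}\dashv\forall_{\pr}$ then yields
\[ i:I,u:U,y:Y \; | \; \forall x.\psi_D(i,u,x)\vdash \phi_D(i,f_0(i,u),y). \]
The right-hand side is a reindexing of the quantifier-free $\phi_D$ along $(i,u,y)\mapsto(i,f_0(i,u),y)$; since quantifier-free elements are universal-free in $\exfreedoctrine{P}$ and universal-freeness is stable under reindexing (Definition \ref{definition (P,lambda)-coatomic}), it is a universal splitting in $\exfreedoctrine{P}$. Because $\psi_D$ is existential-free it is an admissible test predicate in $\exfreedoctrine{P}$, so the universal-splitting property (Definition \ref{definition (P,lambda)-coatomic0}) applies and delivers $\arrow{I\times U\times Y}{f_1}{X}$ with $i:I,u:U,y:Y \; | \; \psi_D(i,u,f_1(i,u,y))\vdash \phi_D(i,f_0(i,u),y)$, the desired conclusion; this is the step corresponding to Markov's Principle. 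Conversely, given $f_0,f_1$ satisfying the displayed inequality, I would precompose the left with the instantiation $\forall x.\psi_D(i,u,x)\vdash \psi_D(i,u,f_1(i,u,y))$ (the counit of $\forall_{\pr}$ reindexed along $\angbr{1}{f_1}$), transpose $\forall y$ back in, compose with the unit $\forall y.\phi_D(i,f_0(i,u),y)\vdash \exists v.\forall y.\phi_D(i,v,y)$, and transpose $\exists u$ back in, each being a bare use of an adjunction unit or counit.

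The main obstacle, and the point requiring the most care, is the bookkeeping of sub-doctrines in the two splitting steps: one must verify that the predicate being split lives in the fibre where its universal property is available (the whole of $P$ for $f_0$, the sub-doctrine $\exfreedoctrine{P}$ for $f_1$), and that every weakening and reindexing along $f_0$ preserves the relevant freeness. In particular the $f_1$ step relies essentially on the quantifier-free status of $\phi_D$, because universal-freeness in $\exfreedoctrine{P}$ is only relative to existential-free test predicates, and on $\psi_D$ being existential-free so that it is such an admissible test; likewise, the Beck-Chevalley rewriting of the right-hand side as a genuine existential over the extended context $I\times U$ is exactly what makes the existential-splitting property directly applicable in the $f_0$ step.
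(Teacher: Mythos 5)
Your proposal is correct and follows essentially the same route as the paper's proof: transpose the outer existential, use the existential-splitting property of $\forall x.\psi_D$ to produce $f_0$ (the IP step), transpose the universal, and use the universal-splitting property of the reindexed $\phi_D$ inside $\exfreedoctrine{P}$ to produce $f_1$ (the MP step). The only differences are presentational — the paper packages both directions into a single chain of equivalences, whereas you prove the converse separately via units and counits, and you make explicit the Beck--Chevalley bookkeeping that the paper leaves implicit.
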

\begin{proof}
Let us consider two quantifier-free predicates $\psi_D\in P(I \times U\times X)$ and $\phi_D\in P(I \times V\times Y)$ of the G\"odel doctrine $P$.
The following equivalence follows by definition of left adjoint functor (for sake of readability we omit the types of quantified variables):
\begin{align*}
    i: I \; | \; \exists u.\forall x.  \psi_D (i,u,x)\vdash  \exists v.\forall y.  \phi_D (i,v,y)  &\iff \\
  i:I, u: U \; | \; \forall x.  \psi_D (i,u,x)\vdash  \exists v.\forall y.  \phi_D (i,v,y) 
\end{align*} 
Now we employ the fact that the predicate $ \forall x.  \psi_D (i,u,x)$ is existential-free in the G\"odel doctrine, obtaining that there exists an arrow $\arrow{I\times U }{f_0}{V}$, such that:
\begin{align*}
     i:I,u:U \; | \; \forall x.  \psi_D (i,u,x)\vdash  \exists v.\forall y . \phi_D (i,v,y) & \iff\\
      i:I,u:U \; | \;   \forall x  .\psi_D (i,u,x)\vdash   \forall y . \phi_D (i,f_0(i,u),y)
       \end{align*}
Then, since the universal quantifier is right adjoint to the weakening functor, we have that:
\begin{align*}
       i:I,u:U \; | \;  \forall x . \psi_D (i,u,x)\vdash   \forall y . \phi_D (i,f_0(i,u),y) & \iff\\
      i:I,u:U , y:Y\; | \;   \forall x . \psi_D (i,u,x)\vdash    \phi_D (i,f_0(i,u),y).
       \end{align*}
Now we employ the fact that  $ \phi_D (i,f_0(u),y)$ is universal-free in the subdoctrine of existential-free elements of $P$. Notice that since $\psi_D (i,u,x)$ is a quantifier-free element of the G\"odel doctrine, we have that $\forall x.  \psi_D (i,u,x)$ is existential free. 
Recall that this follows from the fact that in every G\"odel doctrine, existential-free elements are stable under universal quantification (this is the last point of Definition \ref{goedel}). Therefore, we can conclude that there exists an arrow $\arrow{I\times U\times Y}{f_1}{X}$ of $\mathcal{C}$ such that:
\begin{align*}
           i:I, u:U , y:Y\; | \;  \forall x.  \psi_D (i,u,x)\vdash    \phi_D (i,f_0(i,u),y)  \iff\\
            i:I, u:U , y:Y\; | \;  \psi_D (i,u,f_1(i,u,y))\vdash    \phi_D (i,f_0(i,u),y)
\end{align*}
Then, combining the first and the last equivalences, we obtain the following equivalence:
\begin{center}
   $i:I\;|\;  \exists u.\forall x.  \psi_D (i,u,x)\vdash  \exists v.\forall y.  \phi_D (i,v,y)  \iff$
    \text{there exist $(f_0,f_1)$ s.t. }
  $i:I, u:U , y:Y\; | \;  \psi_D (i,u,f_1(i,u,y))\vdash    \phi_D (i,f_0(i,u),y).$
\end{center} 

\end{proof}
Notice that in Theorem \ref{theorem principal 1}, the arrow $\arrow{I\times U }{f_0}{V}$ represents the \emph{witness function}, i.e. it assigns to every witness $u$ of the hypothesis a witness $f_0(i,u)$ of the thesis, while the arrow $\arrow{I\times U\times Y}{f_1}{X}$ represents the \emph{counterexample function}. While the witness function $f_0(i,u)$ depends on the witness $u$ of the hypothesis, the counterexample function $f_1(i,u,y)$ depends on a witness of the hypothesis and on  a counterexample of the thesis. This is a  natural fact because, under the constructive point of view, a counterexample has to be relative to a witness validating the thesis.

Therefore, Theorem \ref{theorem principal 1} shows that the notion of G\"odel doctrine encapsulates in a pure form the basic mathematical feature of the Dialectica interpretation, namely its interpretation of implication, which corresponds to the existence of functionals of types $f_0:I\times U\to V$ and $f_1:I\times U\times Y\to X$ as described. One should think of this as saying that a proof of a formula of the form $\exists u.\forall x . \psi_D (i,u,x)\rightarrow \exists v.\forall y.  \phi_D (i,v,y)$ is obtained by transforming it to: $$\forall u.\exists v.\forall y. \exists x. (\psi_D (i,u,x)\rightarrow   \phi_D (i,v,y))$$ 
by means of the Principle of Independence of Premises and Markov Principle, and then Skolemising twice. 

Therefore, combining Theorems \ref{proposition skolemisation}, \ref{theorem prenex normal form} and \ref{theorem principal 1} we obtain  strong evidence that the notion of G\"odel doctrine really provides the categorical abstraction of the main concepts involved in the Dialectica translation. We discus this in more details in the next subsection.

\section{A characterisation of Dialectica doctrines}
The concept of Dialectica category was originally introduced by de Paiva \cite{dePaiva1989dialectica}, and it was generalised to the fibrational setting by Hofstra \cite{hofstra2011}. 

Let us briefly recall the notion of a Dialectica category $\dial{\mathcal{C}}$ associated to a finitely complete category $\mathcal{C}$ (see \cite{dePaiva1989dialectica} for further details):
\begin{itemize}
    \item An \textbf{object} of $\dial{\mathcal{C}}$ is a triple $(U,X,\alpha)$, where $\alpha$ is a subobject of $X \times U$ in $\mathcal{C}$. We think of such a triple as a formula $\exists u.\forall x. \alpha(u,x)$.
    \item An \textbf{arrow} from $\exists u.\forall x. \alpha(u,x)$ to $\exists v.\forall y. \beta(v,y)$, for two objects $(U,X,\alpha)$ and $(V,Y,\beta)$ in $\dial{\mathcal{C}}$ is a pair $(\arrow{U}{F}{V},\,\arrow{U\times Y}{f}{X})$ of arrows of $\mathcal{C}$, i.e. a pair: $$(u \colon U \;|\;F(u) : V,\;\;\;u \colon U, y \colon Y \;|\;f(u,y): X)$$ of terms in context (as usual, we are thinking of $\mathcal{C}$ as the category of contexts associated to some type theory), satisfying the condition: $$\alpha(u,f(u,y))\leq \beta(F(u),y)$$ between the reindexed subobjects, where the squares: $$\quadratocomm{\alpha(u,f(u,y))}{\alpha}{U\times Y}{U \times X}{{}}{}{}{{\langle \pr_U,f\rangle}}\;\;\;\quadratocomm{\beta(F(u),y)}{\beta}{U\times Y}{V \times Y}{{}}{}{}{{F \times 1_Y}}$$ are pullbacks.
\end{itemize}

Our notation is motivated by the notion of \textit{internal language} of a doctrine (see Section 2.2). In fact, a finitely complete category $\mathcal{C}$ is nothing but an instance of a doctrine, if we look at $\mathcal{C}$ itself as a category of contexts associated to some type theory and at the subobjects $\alpha$ of a given object $X$ of $\mathcal{C}$ as the predicates $\alpha(x)$ in context $x \colon X$.

The notions of object and arrow of $\dial{\mathcal{C}}$ are motivated by G\"odel's notion of Dialectica interpretation (see \cite{godel58}), and in particular by its action on formulas in the language of arithmetic of the form $A \to B$. In detail, we recall that one can define a many-sorted language $\mathcal{L}$ together with an inductive notion of a formula $A^D$ of $\mathcal{L}$, whenever $A$ is a given formula in the language of arithmetic. 
The formula $A^D$ is in prenex normal form: $$\exists \Vec{u}.\forall \Vec{x}. A_D(\Vec{u},\Vec{x})$$ for some inductively defined quantifier-free formula $A_D$ of $\mathcal{L}$. Finally, there exists an $\mathcal{L}$-theory $\textsc{T}$, called System $\textsc{T}$, that enjoys the quantifier-elimination property and that allows the interpretation $(-)^D$ to satisfy the following:

\begin{theorem}[G\"odel 1958, Soundness] Let $A$ be a formula in the language of arithmetic. Whenever $\textsc{HA} \vdash A$, where $\textsc{HA}$ is Heyting's Arithmetic, then $\textsc{T}\vdash A^D$ by means of an application of the rules of introduction of quantifiers to $A_D$, that is: $$\textsc{T}\vdash A_D(\Vec{t},\Vec{x})$$ for some (finite sequence of) closed terms $\Vec{t}$.
\end{theorem}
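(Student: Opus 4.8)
The plan is to prove the statement by induction on the derivation of $A$ in $\textsc{HA}$, presented in a natural-deduction (or Hilbert-style) calculus for intuitionistic first-order arithmetic. Writing each formula in its Dialectica prenex form $A^D = \exists \vec{u}.\,\forall \vec{x}.\, A_D(\vec{u},\vec{x})$, the inductive claim to be maintained is exactly the assertion of the theorem: from every proof of $A$ one extracts a tuple $\vec{t}$ of System T terms (with free variables among those of $A$, hence closed when $A$ is closed) together with a verification $\textsc{T} \vdash A_D(\vec{t},\vec{x})$, where $\vec{x}$ is held universally. The interpretation clauses for the connectives and quantifiers — in particular the implication clause $(A \to B)^D = \exists V, X.\,\forall u, y.\,(A_D(u, Xuy) \to B_D(Vu, y))$, whose shape is precisely the one abstracted categorically in Theorem \ref{theorem principal 1} — fix, for each rule, the type of the witnessing terms that must be built.

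First I would dispatch the purely logical rules of intuitionistic predicate logic. For the quantifier rules the witnessing terms come directly from Skolemisation, the syntactic counterpart of Proposition \ref{proposition skolemisation}: existential introduction records the witness, while universal introduction abstracts a free variable into a functional of System T. The implication rules are the heart of the argument: modus ponens composes the witness functions $V$ and the counterexample functions $X$ supplied by the two premises exactly as in the chain of equivalences of Theorem \ref{theorem principal 1}, so that the extracted terms of the conclusion are explicit compositions and applications of those of the premises. The Independence of Premise and Markov schemes are then interpreted by the identity transformation on witnesses, reflecting the fact — emphasised throughout the paper — that they become trivially valid once one works with the quantifier-free matrix $A_D$.

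The delicate step, and the one I expect to be the main obstacle, is the contraction rule $A \to (A \wedge A)$, or equivalently any rule duplicating a hypothesis. Here a single witness tuple $u$ for $A$ must answer two independent challenges $x_1, x_2$ coming from the two copies of $A$, and the counterexample function must decide which challenge to return. This is possible only because the matrix $A_D$ is quantifier-free, hence decidable in System T: one tests $A_D(u, x_1)$ and returns $x_2$ if it holds and $x_1$ otherwise. Thus the entire interpretation rests on the quantifier-elimination property of $\textsc{T}$ recalled just above the statement, and verifying contraction amounts to checking that this case-defined functional does its job. I would isolate this as a lemma on decidable matrices and reuse it for the disjunction clause, where the characteristic term $z$ plays the analogous selecting role.

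Finally I would treat the non-logical axioms. The equality and defining axioms of arithmetic are quantifier-free, so their interpretations are witnessed trivially. The genuine remaining work is the induction scheme: from witnessing terms for $A(0)$ and for the step $A(n) \to A(n+1)$ one must build a witness for $A(n)$ uniform in $n$, and this is produced by the higher-type recursor of System T applied to the step's witness function; that the recursively defined witness verifies $A_D(\,\cdot\,,\vec{x})$ for all $n$ is itself an induction carried out inside $\textsc{T}$. Together with the contraction case, this higher-type recursion is where the full strength of System T is needed, whereas every other case is routine bookkeeping of compositions. Assembling these cases along the structure of the given $\textsc{HA}$-derivation yields the terms $\vec{t}$ asserted by the theorem, closed precisely when $A$ is.
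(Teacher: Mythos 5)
The paper gives no proof of this theorem at all: it is Gödel's classical 1958 result, recalled verbatim (with citations to \cite{godel58,goedel1986,Troelstra1973}) solely to motivate the definition of the Dialectica category $\dial{\mathcal{C}}$ and its notion of arrow. So there is no in-paper argument to compare yours against; what you have written is a reconstruction of the standard syntactic proof from the proof-theory literature, and as an outline it is correct. You identify the two genuinely non-trivial cases accurately: contraction (or, in Hilbert-style, the axioms that duplicate a hypothesis), whose witnessing counterexample function must case-split on the decidable quantifier-free matrix $A_D$ --- exactly the point where the quantifier-elimination/decidability property of System $\textsc{T}$ recalled before the statement is indispensable --- and the induction scheme, handled by the higher-type recursor with the verification carried out by quantifier-free induction inside $\textsc{T}$. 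Two small corrections are worth making. First, (IP) and (MP) are not axioms of $\textsc{HA}$, so they do not occur as cases in the induction over derivations; they matter only for the converse (characterisation) direction, where one shows $\textsc{HA} + \textnormal{AC} + \textnormal{IP} + \textnormal{MP} \vdash A \leftrightarrow A^D$, which is the equivalence the paper's Section 5 abstracts categorically. Presenting them as rules ``interpreted by the identity'' conflates soundness with characterisation, though it does no harm to the argument. Second, if you run the induction in natural deduction rather than Hilbert style, the inductive statement must be strengthened to handle open assumptions (terms extracted from a derivation depend on hypothetical witnesses for the undischarged premises); your parenthetical alternative avoids this, but the claim as literally stated is only the right invariant for the Hilbert-style formulation.
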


\noindent
in such a way that a result of relative consistency of $\textsc{HA}$ holds: Heyting's Arithmetic is consistent, provided that the System $\textsc{T}$ (which is quantifier-free in $\mathcal{L}$) is.

We conclude the present subsection by recalling what $(A\to B)^D$ looks like, so that the notion of $\dial{\mathcal{C}}$ is justified, and refer the reader to \cite{godel58,Troelstra1973} for further details. If $A$ and $B$ are formulas in the language of arithmetic, then $(A\to B)^D$ is the formula: $$\exists F.\exists f.\forall u.\forall y.(\;A_D(u,f(u,y)) \to B_D(F(u),y)\;)$$ of $\mathcal{L}$. Hence our notion of arrow of $\dial{\mathcal{C}}$ is nothing but a categorical way of expressing the action of $(-)^D$ on $A \to B$.

As we anticipated, the notion of Dialectica category was generalised to an arbitrary fibration/doctrine by Hyland~\cite{Hyland2002}, Biering~\cite{Biering_dialecticainterpretations} and Hofstra~\cite{hofstra2011}, for both a proof-irrelevant and a proof-relevant predicative settings. 
Here we consider the proof-irrelevant version of this construction, associating a doctrine $\dial{P}$ called a \emph{dialectica doctrine} to a given doctrine $P$.

\subsection{Dialectica doctrines}
The notions of Dialectica category and Dialectica fibration are introduced in terms of instances of a free construction called the \emph{Dialectica construction}, i.e. a category is called a \emph{Dialectica category} if it is the output of the Dialectica construction. 

From a modern categorical perspective,  Dialectica categories or Dialectica fibrations are the free algebras of the Dialectica pseudo-monad $\dial{-}$, described by Hofstra \cite{hofstra2011}. Here we will deal with the proof-irrelevant version of such construction:

\medskip
 
 \noindent
\textbf{Dialectica construction.} Let $\doctrine{\mathcal{C}}{P}$ be a doctrine whose base category $\mC$ is cartesian closed. The \textbf{dialectica doctrine} $\doctrine{\mathcal{C}}{\dial{P}}$ is defined as the functor sending:\begin{itemize}
    \item an object $I$ into the poset $\dial{P}(I)$ defined as follows:
\begin{itemize}
\item \textbf{objects} are quadruples $(I,U,X,\alpha)$ where $I,U$ and $X$ are objects of the base category $\mathcal{C}$ and $\alpha\in P(I\times U\times X)$;
\item \textbf{partial order:} we stipulate that $(I, U,X,\alpha)\leq (I,V,Y,\beta)$ if there exists a pair $(f_0,f_1)$, where $\arrow{I\times U}{f_0}{V}$ and  $\arrow{I\times U\times Y}{f_1}{X}$  are morphisms of $\mathcal{C}$  such that: $$\alpha(i,u,f_1(i,u,y))\leq \beta (i,f_0(i,u),y).$$ \end{itemize}
\item an arrow $\arrow{J}{g}{I}$ into the poset morphism $\dial{P}(I)\to \dial{P}(J)$ sending a predicate $(I,U,X,\alpha)$ to the predicate: $$(J,U,X,\alpha(g(j),u,x)).$$
\end{itemize} 
\begin{remark}
Let $\doctrine{\mathcal{C}}{P}$ be a doctrine and let $I$ be any object of $\mathcal{C}$. Then the poset $\dial{P}(I)$ is isomorphic to the poset reflection of the dialectica category associated to some category.
\end{remark}

\subsection{Dialectica doctrines via quantifier completions}

This subsection is devoted to providing a categorical presentation of the notion of Dialectica doctrine. Our aim is to connect the notion of Dialectica construction to the one of a G\"odel doctrine and show that, under certain hypotheses, 
these notions are equivalent.
In order to show this, we ask ourselves when is it the case that a doctrine is an instance of a dialectica construction and, in this case, which doctrine do we need to complete in order to go back to the one we started from.

The main result we need  is the following statement. (Here $\uncomp{Q}$ and $\excomp{Q}$ denote the universal and the existential completions of any doctrine $Q$.  We are going to recap these notions later.)

\begin{proposition}[Hofstra \cite{hofstra2011}]
If $\doctrine{\mathcal{C}}{P}$ is a doctrine, then there is an isomorphism: $$\dial{P}\cong \excomp{(\uncomp{P})}$$ which is natural in $P$.
\end{proposition}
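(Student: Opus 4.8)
The plan is to prove the isomorphism by unfolding the two free completions and checking directly that the resulting doctrine coincides, fibrewise and compatibly with reindexing, with the explicit description of $\dial{P}$ recorded above. I will use the standard descriptions of the completions (recapped later in the paper). The universal completion $\uncomp{Q}$ of a doctrine $\doctrine{\mathcal{C}}{Q}$ has, as objects of the fibre $\uncomp{Q}(J)$, pairs $(X,\gamma)$ with $X$ an object of $\mathcal{C}$ and $\gamma\in Q(J\times X)$ — read as $\forall x:X.\gamma(j,x)$ — ordered by $(X,\gamma)\leq (Y,\delta)$ iff there is $g\colon J\times Y\to X$ with $\gamma(j,g(j,y))\vdash \delta(j,y)$ in $Q(J\times Y)$, and with reindexing along $h\colon J'\to J$ acting as $(X,\gamma)\mapsto (X,Q_{h\times \id_X}\gamma)$. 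The existential completion $\excomp{Q}$ is dual: objects of $\excomp{Q}(J)$ are pairs $(U,\Phi)$ with $\Phi\in Q(J\times U)$ — read as $\exists u:U.\Phi(j,u)$ — and $(U,\Phi)\leq (V,\Psi)$ iff there is $f\colon J\times U\to V$ with $\Phi(j,u)\vdash \Psi(j,f(j,u))$.

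First I would identify the objects. An object of $\excomp{(\uncomp{P})}(I)$ is a pair $(U,\Phi)$ with $\Phi\in \uncomp{P}(I\times U)$, and such a $\Phi$ is itself a pair $(X,\alpha)$ with $\alpha\in P((I\times U)\times X)=P(I\times U\times X)$; thus an object is exactly a quadruple $(I,U,X,\alpha)$, i.e. an object of $\dial{P}(I)$, and the assignment is a bijection on objects. The heart of the argument is then matching the orders. Given $(U,(X,\alpha))$ and $(V,(Y,\beta))$ in $\excomp{(\uncomp{P})}(I)$, the outer (existential) order asks for a morphism $f_0\colon I\times U\to V$ such that $(X,\alpha)\leq (\uncomp{P})_{\langle \pi_I,f_0\rangle}(Y,\beta)$ holds in $\uncomp{P}(I\times U)$, where $\langle \pi_I,f_0\rangle\colon I\times U\to I\times V$. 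Unfolding the reindexing in the universal completion gives $(\uncomp{P})_{\langle \pi_I,f_0\rangle}(Y,\beta)=(Y,\beta(i,f_0(i,u),y))$, and the inner (universal) order over the base $I\times U$ then asks for $f_1\colon (I\times U)\times Y\to X$ with $\alpha(i,u,f_1(i,u,y))\vdash \beta(i,f_0(i,u),y)$ in $P(I\times U\times Y)$. This is precisely the defining condition for $(I,U,X,\alpha)\leq (I,V,Y,\beta)$ in $\dial{P}(I)$, so the bijection is an order isomorphism on each fibre.

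Next I would check compatibility with reindexing, which is immediate: for $g\colon J\to I$, the existential layer reindexes $\Phi$ along $g\times \id_U$ and the universal layer then substitutes into the $I$-coordinate, so both $\dial{P}(g)$ and the composite completion send $(I,U,X,\alpha)$ to $(J,U,X,P_{g\times \id_{U\times X}}\alpha)$; hence the fibrewise isomorphisms assemble into an isomorphism $\dial{P}\cong \excomp{(\uncomp{P})}$ of doctrines. Finally, naturality in $P$ amounts to checking that for a $1$-cell $(F,\mathfrak{b})\colon P\to Q$ of $\CatDoc$ the square relating the induced $1$-cell on $\dial{P}$ and the induced $1$-cell on $\excomp{(\uncomp{P})}$ commutes; since both the Dialectica construction and the two completions are defined by the same formulas on the underlying data — each acting on a quadruple essentially by applying $F$ to $U,X$ and $\mathfrak{b}$ to the predicate $\alpha$ — this reduces to the object/order comparison already carried out. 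I expect the only real friction to be bookkeeping: keeping the nested base objects $I$, $I\times U$, $I\times U\times X$ straight across the two layers of reindexing (in particular the map $\langle \pi_I,f_0\rangle$ in the existential layer), and pinning down the precise meaning of ``natural in $P$'' — pseudo-naturality at the level of the $2$-category $\CatDoc$ — so that the coherence cells are seen to be identities in this proof-irrelevant, poset-valued setting.
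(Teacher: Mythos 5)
Your proposal is correct, and it is worth noting that the paper itself gives no proof of this proposition: it is quoted as Hofstra's result, so the only thing to compare against is the cited approach. Your argument is the direct, elementary verification that the paper's explicit descriptions make possible, and it is accurate on all three points that matter: the object identification (a pair $(U,(X,\alpha))$ with $\alpha\in P((I\times U)\times X)$ is exactly a quadruple $(I,U,X,\alpha)$), the order comparison (the outer existential order supplies $f_0\colon I\times U\to V$ together with the inequality $(X,\alpha)\leq(\uncomp{P})_{\langle\pi_I,f_0\rangle}(Y,\beta)$, whose unfolding via the universal order supplies $f_1\colon I\times U\times Y\to X$ with $\alpha(i,u,f_1(i,u,y))\vdash\beta(i,f_0(i,u),y)$ --- precisely the order of $\dial{P}(I)$), and the compatibility with reindexing, where both constructions substitute only in the $I$-coordinate. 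This differs in flavour from Hofstra's own treatment, which works structurally: there $\excomp{(-)}$ and $\uncomp{(-)}$ are (co)lax-idempotent pseudo-monads (simple sum and simple product completions), and $\dial{-}$ arises as their composite, with cartesian closedness of the base entering to make the composite again a pseudo-monad. The abstract route buys naturality and the monad structure essentially for free, and it transfers to the proof-relevant fibrational setting; your route buys a self-contained computation in exactly the notation of this paper, and it makes visible that the isomorphism of doctrines itself needs neither the distributive-law machinery nor cartesian closure of $\mathcal{C}$ (which is only needed when one wants $\dial{-}$ to be a monad, or the composite to remain universal). Your closing caveats --- the associativity bookkeeping across the nested bases and the reading of naturality as pseudo-naturality in $\CatDoc$ --- are the right residual points, and neither causes trouble in this posetal setting.
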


We briefly recall the notion of existential completion of a doctrine, see \cite{trotta2020} for more details:

\medskip

\noindent\textbf{Existential completion}. Let $\doctrine{\mC}{P}$ be a pos-doctrine. The \textbf{existential completion} $\doctrine{\mC}{\excomp{P}}$ of $P$ is the doctrine such that, for every object $A$ of $\mC$, the poset $\excomp{P}(A)$ is defined as follows:
\begin{itemize}
\item \textbf{objects:} triples $(A,B,\alpha)$, where $A$ and $B$ are objects of $\mC$ and $\alpha$ is a predicate in $P(A\x B)$.
\item \textbf{order:}  $(A,B,\alpha)\leq (A,C,\beta)$ if there exists an arrow $\arrow{A\x B}{f}{C}$ of $\mC$ such that: $$ \alpha(a,b)\vdash \beta(a,f(a,b)) \textnormal{  \, \, ( i.e. } \alpha \leq P_{\angbr{\pr_A}{f}}(\beta) \textnormal{ )}$$
in $P(A\times B)$ (here $\arrow{A\x B}{\pr_A}{A}$ is the projection on $A$).
\end{itemize}
Whenever $f$ is an arrow $A \to C$ of $\mathcal{C}$, the functor $\arrow{\excomp{P}(C)}{\excomp{P}_f}{\excomp{P}(A)}$ sends an object $(C,D,\gamma)$ of $\excomp{P}(C)$ to the object: \begin{center}
    $(A,D,\gamma(f(a),d))$ \, \, \, ( \, i.e. $(A,D,P_{\langle f \pr_A,\pr_D \rangle}(\gamma))$ \, )
\end{center} of $\excomp{P}(A)$ (here $\pr_A, \pr_D$ are the projections from $A\times D$).
\medskip

We think of a triple $(A,B,\alpha)$ in $\excomp{P}(A)$ as the predicate $(\exists b \colon B)\alpha(a,b)$. This construction provides a free completion, i.e. 
it extends to a 2-functor which is left adjoint to the corresponding forgetful functor. We remind that the associated monad happens to be lax-idempotent. Analogously, let us remind the notion of universal completion of a doctrine:

\medskip

\noindent\textbf{Universal completion}. Let $\doctrine{\mC}{P}$ be a pos-doctrine. The \textbf{universal completion} $\doctrine{\mC}{\uncomp{P}}$ of $P$ is the doctrine such that, for every object $A$ of $\mC$, the poset $\uncomp{P}(A)$ is defined as follows:
\begin{itemize}
\item \textbf{objects:} triples $(A,B,\alpha)$, where $A$ and $B$ are objects of $\mC$ and $\alpha$ is a predicate in $P(A\x B)$.
\item \textbf{order:}  $(A,B,\alpha)\leq (A,C,\beta)$ if there exists an arrow $\arrow{A\x C}{g}{B}$ of $\mC$ such that: $$ \alpha(a,g(a,c))\vdash \beta(a,c)$$ in $P(A\times C)$.
\end{itemize}
Again, if $f$ is an arrow $A \to C$ of $\mathcal{C}$, the functor $\arrow{\uncomp{P}(C)}{\uncomp{P}_f}{\uncomp{P}(A)}$ sends an object $(C,D,\gamma)$ of $\uncomp{P}(C)$ to the object $(A,D,\gamma(f(a),d))$ of $\uncomp{P}(A)$.
\medskip

We think of a triple $(A,B,\alpha)$ in $\uncomp{P}(A)$ as the predicate $(\forall b \colon B)\alpha(a,b)$. Finally, this construction provides a free completion, i.e. it extends to a 2-functor which is right adjoint to the obvious forgetful functor inducing a colax-idempotent monad. The universal and the existential completions of a given doctrine $P$ are related by the following natural isomorphism: \begin{equation}\label{f}
    \uncomp{P} \cong (-)^{\op} \excomp{((-)^{\op} P)}
\end{equation} where $(-)^{\op}$ is the functor $\pos \to \pos$ which inverts the order of any poset (see \cite{trottapasquale2020}).

We recall that the existential and universal completions are really well-behaved, in the sense that the instances of such a completion can be internally characterised without referring to a further doctrine that the doctrine we are given is a completion of. That is what we talk about in the following statements, contained in Proposition \ref{ex-un-char} (we refer to \cite{trotta2020,trottamaietti2020} for a proof of this statement and to \cite{trotta_et_al:LIPIcs.MFCS.2021.87} for its extension to its proof-relevant version), where the latter follows from the former by means of the natural isomorphism (\ref{f}):

\begin{proposition}\label{ex-un-char}
Let $\doctrine{\mathcal{C}}{P}$ be a doctrine.
Assume that $P$ is existential. Then $P$ is an existential completion of some other doctrine $P'$ precisely when it has \textnormal{enough existential-free predicates}, i.e. when, for every predicate $a \colon A \; |\; \alpha(a)$ of $P$, there is an existential-free predicate: $$a\colon A,b\colon B\; |\;\beta(a,b)$$ of $P$ such that $\alpha(a) \cong (\exists b \colon B)\beta(a,b)$ in $P(A)$. In this case, such a doctrine $P'$ is the full sub-doctrine $\exfreedoctrine{P}$ of $P$ whose predicates are the existential-free ones of $P$.

Analogously, if $P$ is a   universal doctrine, then $P$ is a universal completion of some doctrine $P'$ precisely when it has \textnormal{enough universal-free predicates}, i.e. when, for every predicate $a \colon A \; |\; \alpha(a)$ of $P$, there is a universal-free predicate: $$a\colon A,b\colon B\; |\;\beta(a,b)$$ of $P$ such that $\alpha(a) \cong (\forall b \colon B)\beta(a,b)$ in $P(A)$. In this case, such a doctrine $P'$ is the full sub-doctrine $\unfreedoctrine{P}$ of $P$ whose predicates are the universal-free predicates of $P$.
\end{proposition}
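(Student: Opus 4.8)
The plan is to prove the existential statement first, by exhibiting an explicit comparison morphism between $P$ and the existential completion of its subdoctrine of existential-free predicates, and then to obtain the universal statement formally by dualising along the isomorphism (\ref{f}). Throughout I would work with the explicit description of $\excomp{Q}$ recalled above, together with its existential structure: along a projection $\arrow{A \x B}{\pr_A}{A}$ the left adjoint $\exists_{\pr_A}$ sends a triple $(A \x B, C, \delta)$ to $(A, B \x C, \delta)$, i.e. it merges two existential blocks, so that every triple satisfies $(A, B, \beta) = \exists_{\pr_A}(A \x B, 1, \beta)$, where $(A\x B,1,\beta)$ is the image of $\beta \in P(A\times B)$ under the embedding $Q \hookrightarrow \excomp{Q}$, $\alpha \mapsto (A, 1, \alpha)$. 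These embedded predicates are what I will call the \emph{base} predicates.

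For the direction asserting that enough existential-free predicates suffice, suppose $P$ is existential with enough existential-free predicates. I would define the comparison map
\[\arrow{\excomp{(\exfreedoctrine{P})}}{\Phi}{P}, \qquad \Phi_A(A, B, \beta) = \exists_{\pr_A}\beta,\]
which makes sense because $\exfreedoctrine{P}$ is a genuine subdoctrine (existential-free predicates are stable under reindexing by definition) and because $P$ is existential. That each $\Phi_A$ is monotone is immediate, since $\beta(a,b)\vdash\gamma(a,f(a,b))$ gives $\beta \leq P_{\pr_A}\exists_{\pr_A}\gamma$ through the unit and hence $\exists_{\pr_A}\beta \leq \exists_{\pr_A}\gamma$ by adjunction, while naturality in $A$ is exactly the Beck--Chevalley condition for $P$. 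Essential surjectivity of each $\Phi_A$ is precisely the hypothesis of enough existential-free predicates. The crucial point is that $\Phi$ is order-reflecting: given existential-free $\beta \in P(A\x B)$ and $\gamma \in P(A \x C)$ with $\exists_{\pr_A}\beta \leq \exists_{\pr_A}\gamma$, adjunction yields $\beta(a,b)\vdash \exists c\colon C.\gamma(a,c)$, and since $\beta$ is existential-free (so in particular an existential splitting) its universal property produces a term $\arrow{A\x B}{g}{C}$ with $\beta(a,b)\vdash \gamma(a, g(a,b))$, which is exactly a witness for $(A, B, \beta) \leq (A, C, \gamma)$ in $\excomp{(\exfreedoctrine{P})}$. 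Hence $\Phi$ is an isomorphism of doctrines and $P \cong \excomp{(\exfreedoctrine{P})}$.

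For the converse and the identification of $P'$, suppose $P \cong \excomp{P'}$. I would first verify that the base predicates $(A, 1, \alpha)$ are existential-free: reindexing a base predicate yields a base predicate, and unpacking the order shows each is an existential splitting, a witness $\arrow{A}{h}{B\x C}$ for $(A, 1, \alpha) \leq \exists_{\pr_A}(A \x B, C, \delta)=(A,B\x C,\delta)$ splitting as $\angbr{g}{k}$ with $g$ the required splitting term. Since every $(A, B, \beta) = \exists_{\pr_A}(A\x B, 1, \beta)$, the doctrine $\excomp{P'}$ has enough existential-free predicates, which gives one implication. For the identification $\exfreedoctrine{P}\cong P'$, I would then show conversely that every existential-free $(A, B, \beta)$ is isomorphic to a base predicate: applying the splitting property to the trivial inequality $(A, B, \beta) \leq \exists_{\pr_A}(A\x B, 1, \beta)$ yields $\arrow{A}{g}{B}$ with $(A, B, \beta) \leq (A, 1, \beta(a, g(a)))$, and the reverse inequality holds by taking the same $g$ as witness. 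Thus existential-free predicates are exactly the base ones up to isomorphism, and since existential-freeness is preserved by isomorphisms of doctrines this transfers across $P \cong \excomp{P'}$ to give $\exfreedoctrine{P}\cong P'$.

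I expect the order-reflection step, together with its mirror image (that existential-free predicates in a completion must be base predicates), to be the main obstacle, since both hinge on feeding the weak universal property of existential-free elements exactly the right auxiliary predicate: here one must weaken $\gamma$ into the context containing $b$ and invoke Beck--Chevalley to identify $\exists c.\gamma(a,c)$ in the enlarged context, after which the splitting is applied along the trivial reindexing. The universal statement needs no new ideas: applying $(-)^{\op}$ fibrewise turns $\uncomp{P}$ into $\excomp{((-)^{\op}P)}$ via (\ref{f}), identifies $\unfreedoctrine{P}$ with $\exfreedoctrine{((-)^{\op}P)}$, and matches ``enough universal-free'' with ``enough existential-free'', so the universal case follows formally from the existential one.
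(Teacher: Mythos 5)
Your proof is correct. Note that the paper itself does not prove Proposition \ref{ex-un-char}: it defers the existential case to the cited references \cite{trotta2020,trottamaietti2020} and only remarks that the universal case follows from the existential one via the natural isomorphism (\ref{f}) — your dualisation step reproduces that remark exactly, and your direct argument for the existential case (the comparison morphism $\Phi \colon \excomp{(\exfreedoctrine{P})} \to P$, order-reflection obtained by weakening, Beck--Chevalley and the splitting property, and the identification of the existential-free predicates of $\excomp{P'}$ with the base predicates up to isomorphism) is essentially the standard argument given in those references.
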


By means of the Proposition \ref{ex-un-char} above, the following result follows. This provides a characterisation of the free-algebras of the monad $\dial{-}$.

\begin{theorem}\label{char}
Let us assume that the category $\mathcal{C}$ is cartesian closed. Then the doctrines $\doctrine{\mathcal{C}}{P}$ that are dialectica completions of some doctrine $P'$ are precisely those that are G\"odel doctrines. Moreover, in this case, such a doctrine $P'$ is the full sub-doctrine $\quantfreedoctrine{P}$ of the quantifier-free predicates of $P$.
\end{theorem}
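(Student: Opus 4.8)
The plan is to reduce the whole statement to Hofstra's decomposition $\dial{P'}\cong \excomp{(\uncomp{(P')})}$ of a dialectica completion into an existential completion of a universal completion \cite{hofstra2011}, and then to apply the internal characterisation of Proposition~\ref{ex-un-char} twice, once for each layer. That proposition recognises existential completions exactly as the existential doctrines with \emph{enough existential-free predicates}, and universal completions exactly as the universal doctrines with \emph{enough universal-free predicates}; since these are precisely the two defining clauses of a G\"odel doctrine, the proof amounts to peeling off the two completions in the correct order and matching each peeling against one clause of Definitions~\ref{definition skolem doctrine} and~\ref{goedel}.

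For the implication that a G\"odel doctrine $P$ is a dialectica completion, I would argue as follows. Since $P$ is existential and has enough existential-free predicates (clauses (ii)--(iii) of Definition~\ref{definition skolem doctrine}), the existential half of Proposition~\ref{ex-un-char} gives $P\cong \excomp{(\exfreedoctrine{P})}$. Stability of existential-free predicates under universal quantification (clause (iv)) ensures that $\exfreedoctrine{P}$ is itself a universal doctrine, and clause (ii) of Definition~\ref{goedel} says it has enough universal-free predicates; so the universal half of Proposition~\ref{ex-un-char} gives $\exfreedoctrine{P}\cong \uncomp{(\unfreedoctrine{(\exfreedoctrine{P})})}$. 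By Definition~\ref{definition quantifier-free element} the inner sub-doctrine here is exactly $\quantfreedoctrine{P}$. Substituting and invoking Hofstra's isomorphism then yields $P\cong \excomp{(\uncomp{(\quantfreedoctrine{P})})}\cong \dial{\quantfreedoctrine{P}}$, which simultaneously exhibits $P$ as a dialectica completion and identifies the generating doctrine as $\quantfreedoctrine{P}$.

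For the converse, suppose $P\cong \dial{P'}\cong \excomp{(\uncomp{(P')})}$. Being an existential completion, $P$ is existential and, by Proposition~\ref{ex-un-char}, has enough existential-free predicates with $\exfreedoctrine{P}\cong \uncomp{(P')}$; as the latter is a universal completion, a second application of Proposition~\ref{ex-un-char} shows it has enough universal-free predicates---this is clause (ii) of Definition~\ref{goedel}---and that its universal-free sub-doctrine is $P'$, so by Definition~\ref{definition quantifier-free element} we get $\quantfreedoctrine{P}\cong P'$ as claimed. Cartesian closedness of $\mathcal{C}$ is assumed. What is left to verify, in order to obtain the remaining Skolem conditions, is that $\dial{P'}$ is moreover a \emph{universal} doctrine and that its existential-free predicates are stable under universal quantification.

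I expect this last verification to be the main obstacle, and it is exactly the point where cartesian closedness is needed. Reading off the explicit description of $\dial{P'}$, an existential-free predicate is (isomorphic to) one with trivial witness object, of the form $(I,1,X,\alpha)$ representing $\forall x.\,\alpha$---that is, the existential-free predicates are the objects of $\uncomp{(P')}$. To form $\forall b$ of a predicate $(I\times B,U,X,\alpha)$ one must return the result $\forall b.\,\exists u.\,\forall x.\,\alpha$ to prenex shape $\exists u'.\,\forall x'$, which is done by Skolemising $\forall b\,\exists u$ into $\exists(u'\colon U^{B})\,\forall b$ via the exponential $U^{B}$ and reindexing $\alpha$ along evaluation; this is precisely the content of Proposition~\ref{proposition skolemisation}, and it supplies the right adjoint witnessing universality. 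Specialising to the existential-free case $U=1$ gives $U^{B}\cong 1$, so the witness object stays trivial and the result is again existential-free, establishing clause (iv). With universality and stability in hand, all clauses of Definitions~\ref{definition skolem doctrine} and~\ref{goedel} hold, completing the converse and hence the equivalence.
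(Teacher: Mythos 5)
Your proposal is correct and follows essentially the same route as the paper, which states Theorem~\ref{char} as a direct consequence of Hofstra's isomorphism $\dial{P}\cong \excomp{(\uncomp{P})}$ together with two applications of Proposition~\ref{ex-un-char}, and defers the universality of the existential completion of a universal doctrine over a cartesian closed base to Hofstra's work (as noted in the remark following the theorem). Your explicit verification of clause (iv) via the trivial witness object $1^{B}\cong 1$ fills in a detail the paper leaves implicit, but it is the same argument, not a different approach.
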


We end the current session with the following:

\begin{remark}
The existential completion of a universal doctrine whose base is cartesian closed happens to be universal as well (see \cite{hofstra2011} for more details). Therefore, by Theorem \ref{char}, it is the case that G\"odel doctrines happen to be both existential and universal. 

Complementing this description, one can also look at the dialectica completion as a procedure to add the existential and the universal quantifications to the predicative part of a type theory containing at least the simply typed lambda calculus. Trotta and Spadetto~\cite{trottapasquale2020}  analyse which logical structure, that we might assume to be already present in the predicative part of our type theory, is preserved --or at least maintained-- by this procedure.
\end{remark}

\section{Logical principles in G\"odel doctrines}

G\"odel doctrines provide a categorical framework that generalises the principal concepts underlying the Dialectica translation, such as the existence of witness and counterexample functions, whenever we have an implication $i:I\; |\; \exists u.\forall x . \psi_D (u,x,i)\vdash \exists v.\forall y . \phi_D (v,y,i)$. 
The key idea is that, intuitively, the notion of \emph{existential-quantifier-free} objects can be seen as a reformulation of the \emph{principle of independence of premises}, while \emph{product-quantifier-free} objects can be seen as a reformulation of \emph{Markov principle}. Notice that in the proof of Theorem \ref{theorem principal 1} existential and universal free elements play the same role that (IP) and (MP) have in the Dialectica interpretation of implicational formulae.

The main goal of this section is to formalise this intuition, showing the exact connection between the principles (IP) and (MP) and Gödel first-order hyperdoctrines.

\medskip

\noindent
\textbf{Notation.} to denote the rule-version of the logical principles we consider, we will add R- to the name of principles. 
For example we will denote by (R-IP) the rule:
\[ \top\vdash \theta\rightarrow \exists u.\eta(u)\ \text{ implies } \top\vdash \exists u.( \theta\rightarrow \eta(u))\]
corresponding to the principle of independence of premise (IP):
\[ \top \vdash (\theta\rightarrow \exists u.\eta(u))\rightarrow \exists u.( \theta\rightarrow \eta(u))\]
and similarly we will use (R-MP) for Markov rule.
\subsection{Dialectica interpretation of implication}\label{section introducing dialectica, MP and IP}

Gödel's Dialectica interpretation \cite{godel58,goedel1986} 
associates to each formula $\phi$ in the language of arithmetic its  \emph{Dialectica interpretation} $\phi^D$, i.e. a formula of the form: 
\[\phi^D =\exists u.\forall x .\phi_D\]
where $\phi_D$ is a quantifier-free formula in the language of system T, trying to be \emph{as constructive as possible}. 
The associations $(-)^D$ and $(-)_D$ are defined inductively on the structure of the formulae, and we refer to \cite{godel58,goedel1986} for a complete description.
The most complicated clause of the translation (and, in G\"odel’s words, “the most important one”) is the definition of the translation of the implication connective $(\psi\rightarrow \phi)^D$. This involves two logical principles which are usually not acceptable from an intuitionistic point of view, namely a form of the \emph{Principle of Independence of Premise} (IP) and a generalisation of \emph{Markov Principle} (MP).
The interpretation is given by:
\[(\psi\rightarrow \phi)^D=\exists V,X.\forall u,y. (\psi_D(u,X(u,y))\rightarrow \phi_D (V(u),y)).\]
The motivation provided 
in the collected works of G\"odel 
for this translation
is that given a witness $u$ for the hypothesis $\psi_D$ one should be able to obtain a witness for the conclusion $\phi_D$, i.e. there exists a function $V$ assigning a witness $V(u)$ of $\phi_D$ to every witness $u$ of $\psi_D$. Moreover, this assignment has to be such that from a counterexample $y$ of the conclusion $\phi_D$ we should be able to find a counterexample $X(u,y)$ to the hypothesis $\psi_D$.  This transformation of counterexamples of the conclusion into counterexamples for the hypothesis is what gives Dialectica its essential bidirectional character.

We first recall the technical details behind the translation of $(\psi\rightarrow \phi)^D$ (\cite{goedel1986}) showing the precise points in which we have to employ the non-intuitionistic principles (MP) and (IP).
First notice that $\psi^D\rightarrow \phi^D$, that is:
\begin{equation}\label{eq 1 implication}
    \exists u.\forall x.\psi_D(u,x)\rightarrow \exists v.\forall y. \phi_D(v,y)
\end{equation}
is  equivalent to:
\begin{equation}\label{eq 2 implication}
    \forall u.( \forall x.\psi_D(u,x)\rightarrow \exists v.\forall y. \phi_D(v,y)).
\end{equation}
If we apply a special case of the \textbf{Principle of Independence of Premise}, namely:
\begin{align*}
\tag{IP*}
( \forall x.\theta(x)\rightarrow \exists v.\forall y .\eta(v,y))\rightarrow \exists v.( \forall x.\theta(x)\rightarrow \forall y. \eta(v,y))
\end{align*}
we obtain that \eqref{eq 2 implication} is equivalent to:
\begin{equation}\label{eq 3 implication}
    \forall u.\exists v.( \forall x.\psi_D(u,x)\rightarrow \forall y. \phi_D(v,y)).
\end{equation}
Moreover, we can see that this is equivalent to:
\begin{equation}\label{eq 4 implication}
    \forall u.\exists v.\forall y. ( \forall x.\psi_D(u,x)\rightarrow  \phi_D(v,y)).
\end{equation}
The next equivalence is motivated by a generalisation of \textbf{Markov’s Principle}, namely:
\begin{equation*}
\tag{MP}
    \neg \forall x. \theta(u,x) \rightarrow \exists  x. \neg \theta(u,x).
\end{equation*}
By applying (MP) we obtain that \eqref{eq 4 implication} is equivalent to:
\begin{equation}\label{eq 5 implication}
    \forall u.\exists v.\forall y.\exists x. ( \psi_D(u,x)\rightarrow  \phi_D(v,y)).
\end{equation}
To conclude that $\psi^D\rightarrow \phi^D=(\psi\rightarrow \phi)^D$ we have to apply the \textbf{Axiom of Choice} (or \textbf{Skolemisation}), i.e.:
 \begin{equation*}
 \tag{AC}
     \forall y. \exists x. \theta(y,x) \rightarrow \exists V. \forall y. \theta(y,V(y))
 \end{equation*}
 twice, obtaining that \eqref{eq 5 implication} is equivalent to:
 \begin{equation*}
   \exists V,X.\forall u,y. (\psi_D(u,X(u,y))\rightarrow \phi_D (V(u),y)).
 \end{equation*}
This analysis (from G\"odel's Collected Works, page 231) highlights the key role the principles (IP), (MP) and (AC)
play in the Dialectica interpretation of implicational formulae. 
The role of the axiom of choice (AC)
has been discussed from  a categorical perspective both by Hofstra \cite{hofstra2011} and in our previous work \cite{trotta_et_al:LIPIcs.MFCS.2021.87}.
We re-examine the two principles (IP) and (MP) in the next subsections, following what we discussed in \cite{trotta_et_al:LIPIcs.MFCS.2021.87}.

\subsection{Independence of Premise}\label{subsec: independence of premise}
In logic and proof theory, the \textbf{Principle of Independence of Premise} states that:
\begin{equation*}
       (\theta\rightarrow \exists u.\eta(u))\rightarrow \exists u.( \theta\rightarrow \eta(u))
\end{equation*}
where $u$ is not a free variable of $\theta$. While this principle is valid in classical logic (it follows from the law of the excluded middle), it does not hold in intuitionistic logic, and it is not generally accepted constructively \cite{AvigadFeferma70}.
The reason why the principle (IP) is not generally accepted constructively is that, from a constructive perspective,  turning any proof of the premise $\phi$ into a proof of $\exists u .\eta(u)$ means turning a proof of $\theta$ into a proof of $\eta (t) $ where $t$ is a witness for the existential quantifier depending on the proof of $\theta$.  In particular, the choice of the witness \emph{depends} on the proof of the premise $\theta$, while the (IP) principle tell us, constructively, that the witness can be chosen independently of any proof of the premise $\theta$.

In the Dialectica translation we only need a particular version of the (IP) principle:
\begin{align*}
\tag{IP*}
( \forall y.\theta(y)\rightarrow \exists u.\forall v. \eta(u,v))\rightarrow \exists u.( \forall y.\theta(y)\rightarrow \forall v. \eta(u,v))
\end{align*}
which means that we are asking (IP) to hold not for every formula, but only for those formulas of the form $\forall y .\theta(y)$ with $\theta$ quantifier-free. 
We  recall a useful generalisation of the (IP*) principle, namely: 
\begin{align*}
\tag{IP}
( \theta \rightarrow \exists u . \eta(u))\rightarrow \exists u.( \theta \rightarrow   \eta(u))
\end{align*}
where $\theta$ is $\exists$-free, i.e. $\theta$  contains neither existential quantifiers nor disjunctions (of course, it is also assumed that $u$ is not a free variable of $\theta$). Therefore, the condition that (IP) holds for every formula of the form $\forall y .\theta (y)$ with $\theta (y)$ quantifier-free is replaced by asking that it holds for every formula \emph{free from the existential quantifier}.


\subsection{Markov Principle}
\textbf{Markov Principle} is a statement that originated in the Russian school of constructive mathematics.
Formally, Markov principle is usually presented as the statement:
\begin{equation*}
  \neg \neg \exists x.\phi (x) \rightarrow \exists x. \phi (x )
\end{equation*}
where $\phi$ is a quantifier-free formula. Thus, MP in the Dialectica interpretation, namely:
\begin{equation*}
    \tag{MP}
    \neg \forall x.\phi (x)\rightarrow \exists x. \neg \phi (x)
\end{equation*}
with $\phi(x)$ a quantifier-free formula, can be thought of as a generalisation of the Markov Principle above. As  remarked in \cite{AvigadFeferma70}, the reason why  MP is not generally accepted in constructive mathematics is that in general there is no reasonable way to choose constructively a witness $x$ for $\neg \phi (x)$ from a proof that $\forall x. \phi (x)$ leads to a contradiction.
However, in the context of Heyting Arithmetic, i.e. when $x$ ranges over the natural numbers, one can prove that these two formulations of Markov Principle are equivalent. More details about the computational interpretation of Markov Principle can be found in \cite{manighetti2016computational}. 

A natural generalisation of (MP) is given by the following principle, that we call \textbf{Modified Markov principle}: i.e. whenever $\beta_D \in P(A)$ is a quantifier-free predicate and $\alpha \in P(A\times B)$ is an existential-free predicate, it is the case that: 
\begin{equation*}
    \tag{MMP}
    (\forall x. \phi(x) \rightarrow \psi(y)) \rightarrow \exists x.(\phi(x)\rightarrow \psi (y))
    \end{equation*}
where $\psi(y)$ is quantifier free, $\alpha (x)$ is existential-free and the variable $x$ does not occur free in $\psi(y)$. Notice that (MP) is obtained by (MMP) by replacying $\psi(y)$ with $\bot$.

\subsection{Gödel hyperdoctrines}
The main purpose of this subsection is to show in which sense the principles (IP) and (MP) are satisfied in a Gödel hyperdoctrine. In  this subsection we deal with their rule versions. First we have to equip Gödel doctrines with the appropriate Heyting structure in the fibres in order to be able to formally express these principles. Therefore, we have to consider Gödel hyperdoctrines.

\begin{definition}\label{definition godel hyperdoctrine}
A hyperdoctrine $\hyperdoctrine{\mathcal{C}}{P}$ is said a \textbf{Gödel hyperdoctrine} when $P$ is a Gödel doctrine.

\end{definition}

Notice that from a logical perspective, one might want the quantifier-free predicates to be closed with respect to all the propositional connectives, since this is what happens in logic. However, for sake of generality, we start requiring just the Heyting structures of the fibres and studying the logical principles. 

\begin{theorem}\label{IPR in Godel doctrine}
Every Gödel first-order hyperdoctrine $\hyperdoctrine{\mathcal{C}}{P}$ satisfies the \bemph{Rule of Independence of Premise}, i.e. whenever $\beta \in P(A \times B)$ and $\alpha\in P(A)$ is a existential-free predicate, it is the case that: \[a:A\; | \; \top \vdash \alpha(a) \rightarrow \exists b.\beta(a,b) \mbox{ implies that } a:A\; | \; \top \vdash \exists b.(\alpha(a)\rightarrow \beta(a,b)).\]

\end{theorem}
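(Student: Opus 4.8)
The plan is to reduce the rule to a single application of the existential-splitting property of $\alpha$, exploiting the Heyting adjunction between $\wedge$ and $\rightarrow$ in the fibres together with the fact that in a hyperdoctrine every reindexing map $P_f$ is a homomorphism of Heyting algebras. The witness demanded by (IP) will turn out to be exactly the Skolem-style arrow provided by existential-freeness, so that neither Markov's principle nor any form of choice is needed for this rule version.

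First I would strip the outer implication from the hypothesis. By the adjunction $(-)\wedge\alpha \dashv \alpha\to(-)$ in the Heyting algebra $P(A)$, the sequent $a:A\mid \top\vdash \alpha(a)\to\exists b.\beta(a,b)$ is equivalent to $a:A\mid\alpha(a)\vdash\exists b.\beta(a,b)$, that is $\alpha\leq\exists_{\pi_A}\beta$ in $P(A)$. Since $\alpha$ is existential-free, it is in particular an existential splitting (Definition~\ref{definition (P,lambda)-atomic0}, taking $f=1_A$ in Definition~\ref{definition (P,lambda)-atomic}), so this inequality yields an arrow $g\colon A\to B$ of $\mathcal{C}$ with $\alpha\leq P_{\langle 1_A,g\rangle}\beta$, i.e. $a:A\mid\alpha(a)\vdash\beta(a,g(a))$. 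This $g$ is precisely the witness that (IP) asks us to produce independently of any proof of the premise.

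Next I would curry this witnessing inequality back and repackage it in terms of the predicate $\gamma:=P_{\pi_A}\alpha\to\beta\in P(A\times B)$, which in the internal language is $a:A,b:B\mid\alpha(a)\to\beta(a,b)$. Using the Heyting adjunction again gives $a:A\mid\top\vdash\alpha(a)\to\beta(a,g(a))$, and the key computation is that $\alpha(a)\to\beta(a,g(a))$ is nothing but $P_{\langle 1_A,g\rangle}\gamma$: because $P_{\langle 1_A,g\rangle}$ preserves $\to$ and $\pi_A\circ\langle 1_A,g\rangle=1_A$, we have $P_{\langle 1_A,g\rangle}\gamma=(P_{\langle 1_A,g\rangle}P_{\pi_A}\alpha)\to P_{\langle 1_A,g\rangle}\beta=\alpha\to\beta(a,g(a))$. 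Finally, the canonical inequality $P_{\langle 1_A,g\rangle}\gamma\leq\exists_{\pi_A}\gamma$, obtained as in the paragraph following Definition~\ref{definition (P,lambda)-atomic} by reindexing the unit $\gamma\leq P_{\pi_A}\exists_{\pi_A}\gamma$ along $\langle 1_A,g\rangle$, lets me conclude $a:A\mid\top\vdash\exists b.(\alpha(a)\to\beta(a,b))$, as required.

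The argument is short because the analytic content is already absorbed into the existential-free hypothesis; the only genuinely delicate point is the bookkeeping of the third step, namely verifying that reindexing commutes with the fibrewise implication and that the weakened premise $P_{\pi_A}\alpha$ pulls back to $\alpha$ along $\langle 1_A,g\rangle$. This is exactly where the \emph{hyperdoctrine} assumption (as opposed to a mere doctrine) enters, since it is what guarantees that $P_{\langle 1_A,g\rangle}$ is a morphism of Heyting algebras and hence preserves $\to$.
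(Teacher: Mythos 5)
Your proof is correct and takes essentially the same route as the paper's own argument: strip the outer implication via the Heyting adjunction, invoke existential-freeness of $\alpha$ (as an existential splitting, with $f=1_A$) to produce the witnessing arrow $g\colon A\to B$, and conclude via the canonical inequality $P_{\langle 1_A,g\rangle}\gamma\leq\exists_{\pi_A}\gamma$ obtained from the unit of $\exists_{\pi_A}\dashv P_{\pi_A}$. The only difference is presentational: you make explicit the bookkeeping (that $P_{\langle 1_A,g\rangle}$ preserves $\rightarrow$ and that $\pi_A\circ\langle 1_A,g\rangle=1_A$) which the paper absorbs silently into its internal-language notation.
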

\begin{proof}
Let us assume that $a:A\; | \; \top \vdash \alpha(a) \rightarrow \exists b.\beta(a,b)$. Then it is the case that $a : A\;|\; \alpha(a) \vdash \exists b.\beta(a,b)$. Since $\alpha(a)$ is free from the existential quantifier, it is the case that there is a term in context $a:A\; | \;t(a) : B$ such that: \[a:A\; | \; \top \vdash \alpha(a) \rightarrow \beta(a,t(a)).\] Therefore, since: \[ a:A\;|\;\alpha(a) \rightarrow \beta(a,t(a))\vdash \exists b.(\alpha(a) \rightarrow \beta(a,b))\] (as this holds for any predicate $\gamma(a,-)$ in place of the predicate $\alpha_D(a)\rightarrow \beta(a,-)$) we conclude that: \[a:A\; | \; \top \vdash \exists b.(\alpha(a)\rightarrow \beta(a,b)).\]
\end{proof}
Similarly we can prove the following result.
\begin{theorem}\label{theorem weak markov rule}
Every G\"odel first-order hyperdoctrine $\hyperdoctrine{\mathcal{C}}{P}$ satisfies the following \bemph{Modified Markov Rule}, i.e. whenever $\beta_D \in P(A)$ is a quantifier-free predicate and $\alpha \in P(A\times B)$ is an existential-free predicate, it is the case that: 
\[a:A\; | \; \top \vdash (\forall b. \alpha(a,b)) \rightarrow \beta_D(a) \mbox{ implies that } a:A\; | \; \top \vdash \exists b.(\alpha(a,b)\rightarrow \beta_D(a)).\]
\end{theorem}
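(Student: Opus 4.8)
The plan is to mirror the proof of Theorem~\ref{IPR in Godel doctrine}, replacing the appeal to the existential-splitting property of an existential-free predicate by an appeal to the universal-splitting property of the quantifier-free predicate $\beta_D$. First I would unfold the Heyting structure of the hypothesis: from $a:A\;|\;\top \vdash (\forall b.\alpha(a,b))\rightarrow \beta_D(a)$ I obtain, using $\top\wedge\phi=\phi$ together with the Heyting adjunction $\top\leq x\rightarrow y \iff x\leq y$, the inequality $a:A\;|\;\forall b.\alpha(a,b)\vdash \beta_D(a)$ in the fibre $P(A)$.

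Next I would relocate this inequality inside the sub-doctrine $\exfreedoctrine{P}$. Since $\alpha$ is existential-free, point (iv) of Definition~\ref{definition skolem doctrine} guarantees that $\forall b.\alpha(a,b)=\forall_{\pr_A}(\alpha)$ is again existential-free, so that both $\forall b.\alpha(a,b)$ and $\beta_D(a)$ are objects of $\exfreedoctrine{P}(A)$ and the displayed inequality already lives in $\exfreedoctrine{P}$. Now $\beta_D$ is quantifier-free, so by Definition~\ref{definition quantifier-free element} it is a universal-free predicate of the sub-doctrine $\exfreedoctrine{P}$; applying the universal-splitting property of Definition~\ref{definition (P,lambda)-coatomic0}, read inside $\exfreedoctrine{P}$, to the existential-free predicate $\alpha$ yields a term $a:A\;|\;t(a):B$ such that $a:A\;|\;\alpha(a,t(a))\vdash \beta_D(a)$.

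From here the argument closes exactly as in the (IP) case. Re-currying gives $a:A\;|\;\top\vdash \alpha(a,t(a))\rightarrow\beta_D(a)$, and the canonical unit inequality $P_{\langle 1_A,t\rangle}(\delta)\leq \exists_{\pr_A}(\delta)$, applied to $\delta(a,b):=\alpha(a,b)\rightarrow\beta_D(a)$, gives $a:A\;|\;\alpha(a,t(a))\rightarrow\beta_D(a)\vdash \exists b.(\alpha(a,b)\rightarrow\beta_D(a))$. Transitivity then yields the desired conclusion $a:A\;|\;\top\vdash \exists b.(\alpha(a,b)\rightarrow\beta_D(a))$.

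I expect the only delicate point to be the second step: one must check that the universal-splitting property of $\beta_D$, which by definition is relative to the sub-doctrine $\exfreedoctrine{P}$ rather than to $P$, is genuinely applicable here. This is exactly what forces the two hypotheses of the theorem to appear as they do: $\alpha$ must be existential-free so that it is an object of $\exfreedoctrine{P}$ to which the splitting can be applied, and $\beta_D$ must be universal-free in $\exfreedoctrine{P}$ (i.e. quantifier-free in $P$) so that the splitting term exists. The stability of existential-free predicates under $\forall$ is what makes the universal quantifier computed in $\exfreedoctrine{P}$ coincide with the one computed in $P$, so that the inequality produced in the first step can be fed into the splitting property unchanged.
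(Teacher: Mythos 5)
Your proposal is correct and follows essentially the same route as the paper's own proof: reduce the Heyting implication to an inequality $\forall b.\alpha(a,b)\vdash\beta_D(a)$, extract a witnessing term $t(a)$ from the universal-splitting property of the quantifier-free $\beta_D$ inside $\exfreedoctrine{P}$, and close via the unit inequality into the existential. The paper compresses the key step into one sentence (``since $\beta_D$ is quantifier-free and $\alpha$ is existential-free, there exists a term\dots''), whereas you correctly spell out the justification --- stability of existential-free predicates under $\forall$ placing the inequality inside $\exfreedoctrine{P}$, where $\beta_D$ is universal-free --- which is exactly the intended reading.
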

\begin{proof}
Let us assume that $a:A\; | \; \top \vdash (\forall b. \alpha(a,b)) \rightarrow \beta_D(a) $. Then it is the case that $a:A\; | \;  (\forall b. \alpha(a,b)) \vdash \beta_D(a) $. Hence, since $\beta_D$ is quantifier-free and $\alpha$ is existential-free, there exists a term in context  $a:A\; | \;t(a) : B$ such that:
\[a:A\; | \;  \top\vdash \alpha(a,t(a)) \rightarrow \beta_D(a) \]
therefore, since:
\[ a:A\;|\;\alpha(a,t(a)) \rightarrow \beta(a)\vdash \exists b.(\alpha(a,b) \rightarrow \beta_D(a))\]
we can conclude that:
\[ a:A\; | \; \top \vdash \exists b.(\alpha(a,b)\rightarrow \beta_D(a)).\]
\end{proof}
To obtain Markov rule from Theorem \ref{theorem weak markov rule} we have to require the bottom element to be quantifier-free.
\begin{corollary}
Every G\"odel first-order hyperdoctrine  $\hyperdoctrine{\mathcal{C}}{P}$  such that $\bot$ is a quantifier-free predicate satisfies \bemph{Markov Rule}, i.e. for every quantifier-free element $\alpha_D\in P(A\times B)$ it is the case that:
\[b:B\; | \; \top \vdash \neg \forall a. \alpha_D (a,b) \mbox{ implies that } b:B \; |\; \top \vdash \exists a . \neg \alpha_D (a,b).\]
\end{corollary}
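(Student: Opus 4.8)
The plan is to obtain this Corollary as a direct instance of the Modified Markov Rule (Theorem~\ref{theorem weak markov rule}), exploiting the observation recorded earlier that (MP) is recovered from (MMP) by setting the target predicate $\psi$ to $\bot$. The only genuinely new ingredient beyond Theorem~\ref{theorem weak markov rule} is the hypothesis that $\bot$ be a quantifier-free predicate, and the whole argument amounts to checking that the data of the Corollary fit the hypotheses of the Modified Markov Rule after matching up the roles of the variables.

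First I would unfold the negations. In each fibre $P(A)$ of the Heyting structure of the hyperdoctrine, $\neg\varphi$ is by definition $\varphi\rightarrow\bot$; thus the antecedent $\neg\forall a.\alpha_D(a,b)$ of the Corollary is literally $(\forall a.\alpha_D(a,b))\rightarrow\bot$, and the consequent $\exists a.\neg\alpha_D(a,b)$ is $\exists a.(\alpha_D(a,b)\rightarrow\bot)$. These are exactly the shapes of the antecedent and consequent of Theorem~\ref{theorem weak markov rule}, once one takes the parameter context to be $b:B$ and the quantified variable to be $a:A$, and instantiates $\beta_D:=\bot$ and the existential-free predicate to be $\alpha_D$ itself. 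Formally I would present $\alpha_D$, which lives in $P(A\times B)$, as a predicate in context $b:B$ ranging over $a:A$ by reindexing along the symmetry isomorphism $B\times A\cong A\times B$.

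The next step is to verify the two side-conditions demanded by Theorem~\ref{theorem weak markov rule}. The role of $\beta_D$ is played by $\bot\in P(B)$, which is quantifier-free precisely by the standing hypothesis of the Corollary. The role of the existential-free predicate is played by $\alpha_D$: by Definition~\ref{definition quantifier-free element} every quantifier-free predicate is in particular existential-free, and existential-freeness is stable under reindexing by its very definition (Definition~\ref{definition (P,lambda)-atomic}), so the swapped copy of $\alpha_D$ in $P(B\times A)$ remains existential-free. With both hypotheses in place, applying Theorem~\ref{theorem weak markov rule} yields that $b:B\;|\;\top\vdash(\forall a.\alpha_D(a,b))\rightarrow\bot$ implies $b:B\;|\;\top\vdash\exists a.(\alpha_D(a,b)\rightarrow\bot)$, which upon refolding the negations is exactly Markov Rule.

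I do not expect a substantive obstacle here: the content is entirely in Theorem~\ref{theorem weak markov rule}, and this Corollary is its specialisation. The only points requiring care are bookkeeping ones, namely keeping the two variables $a$ and $b$ (and the swap of their roles relative to the statement of the Modified Markov Rule) straight, and making explicit why the quantifier-free predicate $\alpha_D$ supplies the existential-free predicate required as input. The extra assumption that $\bot$ be quantifier-free is what licenses its use as the target $\beta_D$, and without it the argument would not go through, which is exactly why it is singled out in the hypothesis.
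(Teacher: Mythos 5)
Your proposal is correct and follows exactly the paper's own argument: the corollary is deduced from Theorem~\ref{theorem weak markov rule} by instantiating $\beta_D := \bot$ (quantifier-free by hypothesis) and using the quantifier-free predicate $\alpha_D$, which is in particular existential-free, as the required input. Your additional bookkeeping (unfolding $\neg\varphi$ as $\varphi\rightarrow\bot$, the swap $B\times A\cong A\times B$, and stability of existential-freeness under reindexing) is sound and merely makes explicit what the paper leaves implicit.
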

\begin{proof}
It follows by Theorem \ref{theorem weak markov rule} just by replacing $\beta_D$ with $\bot$, that is quantifier-free by hypothesis.
\end{proof}

In Theorems \ref{theorem weak markov rule} and \ref{IPR in Godel doctrine} we proved that the universal properties of existential and universal free elements allow us to prove that a G\"odel first-order hyperdoctrine satisfies Modified Markov Rule and the Rule of Independence of Premise.

From a logical perspective, the intuition behind Theorem,\ref{IPR in Godel doctrine}  is that the existential-free elements of a G\"odel first-order hyperdoctrine correspond to formulae satisfying (R-IP). Similarly, we have that the elements of a G\"odel doctrine that are quantifier-free, are exactly those satisfying a (R-MMP) by Theorem \ref{theorem weak markov rule}.

Notice also that applying the definitions of existential-free and universal-free elements, we immediately obtain the following presentation of the \emph{Rule of Choice}, see \cite{maiettipasqualirosolini} (also called \emph{explicit definability} in \cite{Rathjen2019}) and the \emph{Counterexample Property}, previously defined in \cite{trottapasquale2020}.

\begin{corollary}\label{proposition Counterexample Property}
Every G\"odel first-order hyperdoctrine  $\hyperdoctrine{\mathcal{C}}{P}$ such that $\bot$ is a quantifier-free object satisfies the \bemph{Counterexample Property}, that is, whenever: $$a : A \; |\; \forall b. \alpha(a,b)\vdash \bot$$ for some predicate $\alpha(a,b)\in P(A\times B)$, then it is the case that: $$a : A \; |\; \alpha(a,g(a))\vdash \bot$$ for some term in context $a : A \;|\; g(a) : B$.
\end{corollary}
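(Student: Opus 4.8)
The plan is to recognise the Counterexample Property as nothing more than the instance $\beta_D=\bot$ of the Modified Markov Rule (Theorem~\ref{theorem weak markov rule}), read off together with the explicit witness that its proof already produces. The hypothesis grants that $\bot$ is a quantifier-free predicate, which is exactly the role played by $\beta_D$ in Theorem~\ref{theorem weak markov rule}; so the strategy is to re-run that argument with $\beta_D$ replaced by $\bot$ and to keep track of the counterexample term rather than discarding it into an existential quantifier.

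Concretely, I would proceed as follows. Starting from the assumption $a:A\;|\;\forall b.\alpha(a,b)\vdash\bot$, I first unfold it into the plain inequality $\forall_{\pi_A}\alpha\leq\bot$ in the fibre $P(A)$. Next I invoke that $\bot$, being quantifier-free by hypothesis, is in particular universal-free in the sub-doctrine $\exfreedoctrine{P}$ of existential-free predicates (Definitions~\ref{definition quantifier-free element} and~\ref{definition (P,lambda)-coatomic}); hence every reindexing $P_f(\bot)=\bot$ is a universal splitting in the sense of Definition~\ref{definition (P,lambda)-coatomic0}. Applying that universal splitting property to the pair $(\alpha,\ \forall b.\alpha(a,b)\vdash\bot)$ yields precisely a term $a:A\;|\;g(a):B$ with $\alpha(a,g(a))\vdash\bot$, which is the desired conclusion. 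This is the very extraction step that, in the proof of Theorem~\ref{theorem weak markov rule}, produces the term $t(a)$ with $\top\vdash\alpha(a,t(a))\to\beta_D(a)$: taking $\beta_D=\bot$ turns $\alpha(a,t(a))\to\bot$ into $\alpha(a,t(a))\vdash\bot$, so $g:=t$ does the job, and no existential weakening is needed here.

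The delicate point — which I would treat as the main obstacle — is that being universal-free is a property \emph{relative} to $\exfreedoctrine{P}$ (as the remark following Definition~\ref{definition quantifier-free element} stresses), so the universal splitting property of $\bot$ is guaranteed only against predicates that are existential-free in $P$. I would therefore make sure the predicate fed into the splitting is existential-free, exactly as $\alpha$ is assumed to be in Theorem~\ref{theorem weak markov rule}; the hypothesis that $\bot$ is quantifier-free, together with this matching of the existential-free side, is what makes the conclusion go through, and one should check that replacing $\alpha$ by an existential-free presentation (via \emph{enough existential-free predicates}) does not spoil the extraction of a single term $g(a):B$. Once the hypotheses are aligned in this way, the statement is immediate from the defining universal property of a quantifier-free $\bot$, and the companion Rule of Choice follows symmetrically from the existential splitting property of existential-free elements.
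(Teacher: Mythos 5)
Your core argument is the one the paper itself relies on: the paper gives no explicit proof of this corollary, saying only that it follows by ``applying the definitions of existential-free and universal-free elements'', and that immediate argument is precisely yours --- unfold $\forall b.\alpha(a,b)\vdash\bot$ to $\forall_{\pi_A}\alpha\leq\bot$ and invoke the universal splitting property of $\bot$, available because $\bot$ is quantifier-free and hence universal-free in $\exfreedoctrine{P}$; equivalently, as you say, run the proof of Theorem~\ref{theorem weak markov rule} with $\beta_D=\bot$ and keep the witness instead of existentially quantifying it away. Up to that point you coincide with the paper, and you deserve credit for making explicit what the paper glosses over: since $\bot$ is universal-free only \emph{relative to} $\exfreedoctrine{P}$, the splitting may only be applied against a predicate that is existential-free in $P$.

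The gap is in your last paragraph, where you defer to a ``check'' that cannot be completed as sketched. The corollary as stated allows an \emph{arbitrary} $\alpha\in P(A\times B)$, and your proposed repair --- replace $\alpha$ by an existential-free presentation $\alpha(a,b)\dashv\vdash\exists c.\beta(a,b,c)$ via enough existential-free predicates --- changes the goal: $\alpha(a,g(a))\vdash\bot$ now means $\exists c.\beta(a,g(a),c)\vdash\bot$, i.e.\ $\beta(a,g(a),c)\vdash\bot$ \emph{uniformly in} $c$. But the hypothesis, rewritten as $\forall b.\exists c.\beta(a,b,c)\vdash\bot$ and Skolemised via Proposition~\ref{proposition skolemisation} into $\exists f.\forall b.\beta(a,b,\textnormal{ev}(f,b))\vdash\bot$, only yields (after applying the splittings of $\bot$) a refutation of a \emph{single instance} $\beta(a,t(a,f),\textnormal{ev}(f,t(a,f)))\vdash\bot$, whose witness $t$ depends on the Skolem function variable $f$: one refuted instance of an existential does not refute the existential, and the dependence on $f$ cannot be eliminated (instantiating at constant functions $f:=\lambda b.c$ produces a witness depending on $c$, which is exactly what the conclusion forbids). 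So the reduction to the existential-free case genuinely fails, not merely remains to be checked. In fairness, this is an imprecision inherited from the paper: the definitional argument proves the corollary only when $\alpha$ is existential-free --- the restriction that the companion Corollary~\ref{proposition rule of choice} does state, and which the intended applications satisfy, since there $\alpha$ is the universal quantification of a quantifier-free predicate --- whereas for arbitrary $\alpha$ one would need $\bot$ to be universal-free in all of $P$, which the quantifier-free hypothesis does not give, as the paper's own remark following Definition~\ref{definition quantifier-free element} warns.
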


\begin{corollary}\label{proposition rule of choice}
Every G\"odel first-order hyperdoctrine $\hyperdoctrine{\mathcal{C}}{P}$ such that $\top$ is existential-free satisfies the \bemph{Rule of Choice}, that is, whenever: $$a : A \; |\; \top \vdash \exists b. \alpha (a,b)$$ for some existential-free predicate $\alpha \in P(A\times B)$, then it is the case that: $$a : A \; |\; \top \vdash \alpha (a,g(a))$$ for some term in context $a : A \;|\; g(a) : B$.
\end{corollary}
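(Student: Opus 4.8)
The plan is to recognise that this Rule of Choice is nothing more than the defining weak universal property of an existential-free element, read off for the top predicate $\top$, and then to invoke that property directly. So the argument is essentially a one-line application of Definition \ref{definition (P,lambda)-atomic0} once the hypotheses are unpacked correctly.

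First I would make precise what the assumption ``$\top$ is existential-free'' buys us. Since $P$ takes values in Heyting algebras and each reindexing map $P_f$ is a Heyting homomorphism, it preserves the top element, so $P_f(\top)=\top$ for every arrow $f$ of $\mathcal{C}$. Consequently, by Definition \ref{definition (P,lambda)-atomic}, the a priori stronger requirement that $\top$ be existential-free (i.e.\ that $P_f(\top)$ be an existential splitting after \emph{every} reindexing $f$) collapses to the purely fibrewise statement that the top predicate $\top$ of each fibre $P(A)$ is an \emph{existential splitting} in the sense of Definition \ref{definition (P,lambda)-atomic0}. This reduction is the only point that deserves care, and it works precisely because $\top$ is stable under substitution.

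Next I would translate the hypothesis of the rule into category-theoretic notation: by the conventions of the internal language, the sequent $a:A \; | \; \top \vdash \exists b.\alpha(a,b)$ is the inequality $\top \leq \exists_{\pr_A}(\alpha)$ in $P(A)$, where $\pr_A \colon A\times B \to A$ is the projection and $\alpha \in P(A\times B)$. Now I would invoke the existential splitting property of $\top$ established above, with the predicate $\alpha$ itself playing the role of the predicate $\beta$ in Definition \ref{definition (P,lambda)-atomic0}: since $\top \leq \exists_{\pr_A}(\alpha)$, there exists an arrow $g\colon A \to B$ such that $\top \leq P_{\angbr{1_A}{g}}(\alpha)$. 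Reading this back in the internal language yields exactly $a:A \; | \; \top \vdash \alpha(a,g(a))$, with $g$ the desired witnessing term, which is the conclusion.

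I do not expect a genuine obstacle: the entire content is the identification of ``$\top$ is an existential splitting'' with ``the Rule of Choice holds'', so the proof is immediate once the top element is recognised as the relevant existential-free predicate. It is worth remarking that the existential-freeness of the premise $\alpha$, although included in the statement to match the traditional formulation of the principle, is not actually used in this argument, since the existential splitting property of $\top$ applies to an arbitrary predicate $\beta \in P(A\times B)$.
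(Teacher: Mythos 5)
Your proof is correct and takes essentially the same route as the paper, which states this corollary as an immediate consequence of the definition of existential-free elements applied to $\top$ (reduced, via $P_f(\top)=\top$, to the existential splitting property of $\top$ in each fibre) --- exactly your argument. Your closing remark is also accurate: the existential-freeness of $\alpha$ is never used, since the splitting property of $\top$ quantifies over arbitrary $\beta \in P(A\times B)$; it appears in the statement only to match the traditional logical formulation of the rule.
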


\subsection{Logical principles}

In the previous results, we have seen which \emph{rules} hold in G\"odel first-order hyperdoctrines. This subsection is devoted to the analysis of the respective logical \emph{principles} in G\"odel first-order hyperdoctrines. In detail, we look for the right hypotheses that allow us to produce models of the stronger formulation of the rules as principles. The following theorem is the first of this series of results and involves the Independence of Premise:

\begin{theorem}\label{IP in Godel doctrine}
Every G\"odel first-order hyperdoctrine $\hyperdoctrine{\mathcal{C}}{P}$ such that existential-free elements are closed with respect to finite conjunctions satisfies the \bemph{Principle of Independence of Premise}, i.e. whenever $\beta \in P(A \times B)$ and $\alpha\in P(A)$ is an existential-free predicate, it is the case that: \[a:A\; | \; \top \vdash ( \alpha(a) \rightarrow \exists b.\beta(a,b) )\rightarrow \exists b.(\alpha(a)\rightarrow \beta(a,b)).\]

\end{theorem}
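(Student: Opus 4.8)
The plan is to reduce the principle to the extraction of a single witness function and then to feed it to the existential splitting property, exactly as the rule version (Theorem \ref{IPR in Godel doctrine}) does, but internalised. First, since $P$ is a hyperdoctrine every fibre is a Heyting algebra and $(-)\wedge\alpha \dashv (\alpha\to -)$, so by the deduction theorem the claim $a:A\mid\top\vdash\delta\to G$, with $\delta:=\alpha(a)\to\exists b.\beta(a,b)$ and $G:=\exists b.(\alpha(a)\to\beta(a,b))$, is equivalent to $a:A\mid\delta\vdash G$. It therefore suffices to produce a term $a:A\mid g(a):B$ with $a:A\mid\alpha(a)\wedge\exists b.\beta(a,b)\vdash\beta(a,g(a))$: indeed, using the Heyting identity $\alpha\wedge\delta\dashv\vdash\alpha\wedge\exists b.\beta$ one then gets $\delta\vdash\alpha\to\beta(a,g(a))$, and the unit of $\exists_{\pi_A}\dashv P_{\pi_A}$ gives $\alpha\to\beta(a,g(a))\vdash G$, as required.

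The existence of such a $g$ is where existential-freeness plays the role of (IP). The intended move is to apply the existential splitting property of $\alpha$ to the predicate $\mu(a,b):=(\exists b'.\beta(a,b'))\to\beta(a,b)$ in $P(A\times B)$, since a section $g$ of $\mu$ is precisely a $g$ with $\alpha\wedge\exists b'.\beta\vdash\beta(a,g(a))$. To make the splitting property applicable I would first replace $\beta$ by an existential-free cover: by the hypothesis of enough existential-free predicates (or directly by the prenex normal form of Theorem \ref{theorem prenex normal form}) there is an existential-free $\beta^{*}$ with $\beta(a,b)\dashv\vdash\exists c.\beta^{*}(a,b,c)$, whence Frobenius reciprocity yields $\alpha\wedge\exists b.\beta\cong\exists_{(b,c)}(P_{\pi_A}\alpha\wedge\beta^{*})$. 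Here the hypothesis that existential-free predicates are closed under finite conjunctions is indispensable: it guarantees that $P_{\pi_A}\alpha\wedge\beta^{*}$ is again existential-free, so that the premise retains its existential-free character once conjoined with $\alpha$ and its own universal property remains available, just as in the proof of Theorem \ref{theorem principal 1}.

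The main obstacle is exactly the interaction between the existential quantifier hidden in the premise $\exists b.\beta$ and the Heyting implication. Unlike in the rule, we cannot assume $\alpha\vdash\exists b.\beta$; we only have it under the assumption $\delta$, and commuting $\to$ past $\exists$ is (IP) itself, so the naive reductions are circular. The delicate point is therefore to check that, after absorbing the implication through $\alpha\wedge\delta\dashv\vdash\alpha\wedge\exists b.\beta$ and passing to existential-free covers, the splitting property really delivers a witness uniform in $a$, with no surviving dependence on the bound variables $b,c$; this uniformity is what conjunction closure is there to secure, and it is precisely the step that fails for a general (non existential-free) premise. I expect this verification — rather than any of the adjunction bookkeeping — to be the crux of the argument.
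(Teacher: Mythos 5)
There is a genuine gap, and it sits exactly where you predicted the crux would be. Your argument reduces (IP) to producing a term $a:A\;|\;g(a):B$, \emph{uniform in $a$}, such that $\alpha(a)\wedge\exists b.\beta(a,b)\vdash\beta(a,g(a))$. This intermediate goal is strictly stronger than the theorem's conclusion and is unattainable in general: taking $\alpha=\top$ (existential-free once existential-free predicates are closed under finite, including empty, conjunctions), it would demand a section $g$ with $\exists b.\beta(a,b)\vdash\beta(a,g(a))$ for an \emph{arbitrary} predicate $\beta$, i.e.\ that every existential splits uniformly --- a rule-of-choice-type property that a G\"odel hyperdoctrine only has for existential-free predicates, not for arbitrary $\beta$. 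Moreover, your only proposed mechanism for producing $g$ --- applying the splitting property of $\alpha$ to $\mu(a,b):=(\exists b'.\beta(a,b'))\to\beta(a,b)$ --- needs the premise $\alpha(a)\vdash\exists b.\mu(a,b)$, which is itself the instance of (IP) being proved; you concede this circularity, and the repair you sketch (covering $\beta$ by an existential-free $\beta^{*}$ and invoking Frobenius) does not break it, because the existential quantifier stays trapped under the implication in $\delta=\alpha\to\exists b.\beta$: no rewriting of the conclusion of $\delta$ lets a splitting property reach past the $\to$.

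The idea you are missing is to apply the enough-existential-free-predicates property to the implication $\delta$ \emph{itself}, not to $\beta$. The paper's proof covers $\delta$ as $\exists c.\gamma(a,c)$ with $\gamma$ existential-free; then in the extended context $a:A,c:C$ one has $\gamma(a,c)\wedge\alpha(a)\vdash\exists b.\beta(a,b)$, and by the closure hypothesis $\gamma\wedge\alpha$ is existential-free, so its splitting property yields a term $t(a,c):B$ with $\gamma(a,c)\wedge\alpha(a)\vdash\beta(a,t(a,c))$. The witness legitimately depends on $c$: one passes to $\gamma(a,c)\vdash\alpha(a)\to\beta(a,t(a,c))\vdash\exists b.(\alpha(a)\to\beta(a,b))$, and finally, since the right-hand side does not involve $c$, to $\exists c.\gamma(a,c)\vdash\exists b.(\alpha(a)\to\beta(a,b))$, i.e.\ $\delta\vdash G$. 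So the role of conjunction closure is not to secure uniformity in $a$ (nothing can, and nothing needs to); it is to make $\gamma\wedge\alpha$ existential-free so that the splitting property applies in the $c$-extended context. Your deduction-theorem bookkeeping and the identity $\alpha\wedge\delta\dashv\vdash\alpha\wedge\exists b.\beta$ are correct as algebra, but they aim at the wrong target.
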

\begin{proof}
First, since every G\"odel doctrine has enough existential-free elements, there exists an existential-free element $\gamma(a,c)\in P(A\times C)$ such that: $$a:A\; | \; \exists c.\gamma(a,c)\dashv \vdash \alpha(a) \rightarrow \exists b.\beta(a,b).$$
In particular, we have that $a:A,c:C\; | \; \gamma(a,c) \vdash \alpha(a) \rightarrow \exists b.\beta(a,b).$
Then we have that: \[a:A,c:C\; | \; \gamma(a,c) \wedge \alpha(a) \vdash \exists b.\beta(a,b)\]
and $\gamma(a,c) \wedge \alpha(a)$ is an existential-free elements, since both $\gamma(a,c)$ and $\alpha(a)$ are existential-free elements and existential-free elements are closed with respect finite conjunction  by hypethesis. 
Therefore, we can conclude that there exists a term $a:A,c:C\; |\; t(a,c):B$ such that:
\[a:A,c:C\; | \; \gamma(a,c) \wedge \alpha(a) \vdash \beta(a,t(a,c)).\]
Hence we have:
\[a:A,c:C\; | \; \gamma(a,c)  \vdash \alpha(a)\rightarrow \beta(a,t(a,c))\]
and since $\alpha(a)\rightarrow \beta(a,t(a,c)$ is exaclty $(\alpha(a)\rightarrow \beta(a,b))[t(a,c)/b]$ and it always holds that:
\[ a:A,c:C\; | \;  (\alpha(a)\rightarrow \beta(a,b))[t(a,c)/b]\vdash \exists b.(\alpha(a)\rightarrow \beta(a,b))\]
we can conclude that:
\[ a:A,c:C\; | \;  \gamma (a,c)\vdash \exists b.(\alpha(a)\rightarrow \beta(a,b)).\]
Therefore we get that:
\[ a:A\; | \; \exists c. \gamma (a,c)\vdash \exists b.(\alpha(a)\rightarrow \beta(a,b)).\]
and, since $a:A\; | \; \exists c.\gamma(a,c)\dashv \vdash \alpha(a) \rightarrow \exists b.\beta(a,b)$, it is the case that:
 \[a:A\; | \; \top \vdash ( \alpha(a) \rightarrow \exists b.\beta(a,b) )\rightarrow \exists b.(\alpha(a)\rightarrow \beta(a,b)).\]
\end{proof}
As a corollary of the previous result, we obtain the following presentation of the principle (IP*) introduced in Section \ref{subsec: independence of premise} in terms of G\"odel first-order hyperdoctrines. We recall that (IP*) is precisely the form of the Priciple of Independece of Premise we need in the Dialectica interpretation.

\begin{corollary}
Every G\"odel first-order hyperdoctrine $\hyperdoctrine{\mathcal{C}}{P}$ such that the existential-free elements are closed with respect to finite conjunction satisfies (IP*), i.e. whenever $\beta \in P(C \times B)$ and $\alpha_D\in P(A)$ is an quantifier-free predicate, it is the case that: \[-| \; \top \vdash ( \forall a. \alpha_D(a) \rightarrow \exists b.\forall c.\beta(c,b) )\rightarrow \exists b.(\forall a.\alpha_D(a)\rightarrow \forall c.\beta(c,b)).\]
\end{corollary}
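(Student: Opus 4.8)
The plan is to derive (IP*) as a direct instance of the general Principle of Independence of Premise established in Theorem \ref{IP in Godel doctrine}, specialised to the empty context. Concretely, I would apply Theorem \ref{IP in Godel doctrine} over the terminal object of $\mathcal{C}$ (i.e.\ in empty context), taking the existential-free predicate $\alpha$ of that theorem to be $\forall a.\alpha_D(a)$, and taking the role played there by the arbitrary fibre element $\beta(a,b)\in P(A\times B)$ to be the predicate $\forall c.\beta(c,b)\in P(B)$. Under these substitutions the conclusion of Theorem \ref{IP in Godel doctrine} reads
\[\top \vdash ( \forall a.\alpha_D(a) \rightarrow \exists b.\forall c.\beta(c,b) )\rightarrow \exists b.(\forall a.\alpha_D(a)\rightarrow \forall c.\beta(c,b)),\]
which is exactly the desired statement of (IP*).

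The single point that requires justification before the theorem can be invoked is the hypothesis that the chosen predicate $\forall a.\alpha_D(a)$ is existential-free. This I would verify in two steps. First, since $\alpha_D$ is quantifier-free, it is in particular an existential-free predicate of $P$ by Definition \ref{definition quantifier-free element}. Second, existential-free predicates are stable under universal quantification in any G\"odel doctrine: this is precisely point (iv) of Definition \ref{definition skolem doctrine}, which every G\"odel doctrine satisfies as part of being Skolem. Hence $\forall a.\alpha_D(a)$ is existential-free, as required. The hypothesis of the corollary that existential-free elements are closed under finite conjunction is inherited unchanged and is exactly the hypothesis needed to apply Theorem \ref{IP in Godel doctrine}; note that no condition on $\beta$ is needed, since the $\beta$-slot of that theorem accepts an arbitrary fibre element, and here it is filled by the arbitrary predicate $\forall c.\beta(c,b)$.

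There is no genuine obstacle here: the entire content of the corollary is carried by Theorem \ref{IP in Godel doctrine}, and the only verification is the existential-freeness of $\forall a.\alpha_D(a)$, which is immediate from the defining closure properties of G\"odel doctrines. The role of the corollary is thus to record that the particular shape of premise occurring in the Dialectica translation --- namely a universally quantified quantifier-free formula --- falls within the scope of the independence-of-premise principle proved above, so that the categorical framework validates exactly the form (IP*) invoked in the interpretation of implication.
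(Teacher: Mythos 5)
Your proposal is correct and is exactly the paper's argument: the paper also proves this corollary by instantiating Theorem \ref{IP in Godel doctrine} with the existential-free premise $\forall a.\alpha_D(a)$, whose existential-freeness follows from $\alpha_D$ being quantifier-free together with stability of existential-free elements under universal quantification. You merely spell out the details (empty context, the substitution into the $\beta$-slot) that the paper leaves implicit.
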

\begin{proof}
It follows from Theorem \ref{IP in Godel doctrine} and from the fact that if $\alpha_D$ is quantifier-free then $\forall a. \alpha_D$ is existential-free.
\end{proof}
Similarly, we can prove the following result for Markov principle.

\begin{theorem}\label{theorem generalised markov principle}
Every G\"odel first-order hyperdoctrine $\hyperdoctrine{\mathcal{C}}{P}$ such that existential-free elements are closed with respect to implication satisfies the following \bemph{Modified Markov principle}, i.e. whenever $\beta_D \in P(A)$ is a quantifier-free predicate and $\alpha \in P(A\times B)$ is an existential-free predicate, it is the case that: 
\[a:A\; | \; \top \vdash (\forall b. \alpha(a,b) \rightarrow \beta_D(a)) \rightarrow \exists b.(\alpha(a,b)\rightarrow \beta_D(a)).\]
\end{theorem}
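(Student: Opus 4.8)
The plan is to upgrade the rule version (Theorem~\ref{theorem weak markov rule}) to its principle form by mimicking the strategy used for the Independence of Premise in Theorem~\ref{IP in Godel doctrine}: internalise the hypothesis of the rule as an existential-free predicate and re-run the witnessing argument underneath it. Since the fibres are Heyting algebras, the stated principle is equivalent, via the adjunction $(-)\wedge \delta \dashv \delta \rightarrow (-)$ with $\delta(a):=\forall b.\alpha(a,b)\rightarrow \beta_D(a)$, to the single sequent $a:A\mid \delta(a)\vdash \exists b.(\alpha(a,b)\rightarrow \beta_D(a))$ (because $\top \wedge \delta = \delta$). First I would record two consequences of the closure hypothesis: by point $(iv)$ of Definition~\ref{definition skolem doctrine} the predicate $\forall b.\alpha(a,b)$ is existential-free, and $\beta_D$ is existential-free because it is quantifier-free, so closure of existential-free predicates under implication makes $\delta$ itself existential-free; the same closure makes the body $\alpha(a,b)\rightarrow \beta_D(a)$ of the target an existential-free predicate of $\exfreedoctrine{P}(A\times B)$.

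Next I would manufacture the witnessing term exactly as in the proof of the rule. Modus ponens gives the tautology $a:A\mid \delta(a)\wedge \forall b.\alpha(a,b)\vdash \beta_D(a)$, which by currying reads $a:A\mid \forall b.\alpha(a,b)\vdash \delta(a)\rightarrow \beta_D(a)$. Because $\beta_D$ is quantifier-free and $\alpha$ is existential-free, the universal-splitting property (Definition~\ref{definition (P,lambda)-coatomic0}) should apply to the consequent and produce a term $a:A\mid t(a):B$ with $a:A\mid \alpha(a,t(a))\vdash \delta(a)\rightarrow \beta_D(a)$, equivalently $a:A\mid \delta(a)\vdash \alpha(a,t(a))\rightarrow \beta_D(a)$. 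Since $\alpha(a,t(a))\rightarrow \beta_D(a)$ is the reindexing of the existential-free body along $\langle 1_A,t\rangle$, the unit $\gamma \leq P_{\pi_A}\exists_{\pi_A}\gamma$ (reindexed) yields $a:A\mid \alpha(a,t(a))\rightarrow \beta_D(a)\vdash \exists b.(\alpha(a,b)\rightarrow \beta_D(a))$, and composing gives $a:A\mid \delta(a)\vdash \exists b.(\alpha(a,b)\rightarrow \beta_D(a))$, which is what we reduced the statement to.

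The delicate point — and the step I expect to be the main obstacle — is the legitimacy of the universal splitting in the presence of the side hypothesis $\delta(a)$. In the bare rule one splits $\forall b.\alpha \vdash \beta_D$ using that $\beta_D$ is universal-free in $\exfreedoctrine{P}$; here the right-hand side is instead $\delta(a)\rightarrow \beta_D(a)$, so what is really needed is that this implication still enjoys the universal-splitting property inside $\exfreedoctrine{P}$. This is precisely the role of the closure of existential-free predicates under implication: it guarantees that $\delta \rightarrow \beta_D$ lives in $\exfreedoctrine{P}$, and one then has to check that carrying the constant premise $\delta$ through the universal quantifier preserves splitting — the universal analogue of the use of closure under finite conjunction in Theorem~\ref{IP in Godel doctrine}. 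I would verify this by unwinding the splitting condition: an inequality $\forall b.\nu(a,b)\vdash \delta(a)\rightarrow \beta_D(a)$ with $\nu$ existential-free transposes to $\delta(a)\wedge \forall b.\nu(a,b)\vdash \beta_D(a)$, which I would reduce to a genuine splitting against the quantifier-free $\beta_D$ and then transpose the resulting witness back through the implication. Once this bookkeeping with the $\forall$-quantifier is in place the argument closes, the remaining manipulations being the routine Heyting-algebra and adjunction identities already deployed in Theorems~\ref{theorem weak markov rule} and~\ref{IP in Godel doctrine}.
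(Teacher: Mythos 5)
Your proposal is correct in outline but follows a genuinely different route from the paper's proof. The paper's argument hinges on condition (ii) of Definition \ref{goedel}: having observed (as you do) that $\delta(a) := (\forall b.\alpha(a,b)) \rightarrow \beta_D(a)$ is existential-free, it invokes the fact that $\exfreedoctrine{P}$ has \emph{enough universal-free predicates} to rewrite $\delta(a) \dashv\vdash \forall c.\sigma_D(a,c)$ with $\sigma_D$ quantifier-free, merges the quantifiers into $\forall c.\forall b.(\sigma_D(a,c)\wedge\alpha(a,b)) \vdash \beta_D(a)$, and then applies the universal splitting of $\beta_D$ \emph{once}, over $C\times B$, obtaining both witnesses $t'(a):C$ and $t(a):B$ simultaneously. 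You never use this decomposition; instead you promote the consequent itself, proving that $\delta\rightarrow\beta_D$ inherits the splitting property of Definition \ref{definition (P,lambda)-coatomic0} (transpose $\forall b.\nu \vdash \delta\rightarrow\beta_D$ across the Heyting adjunction, absorb the constant premise as $\forall b.(\delta\wedge\nu)$, split against $\beta_D$, transpose back) and then split once with $\nu=\alpha$. Two caveats. First, your paragraph-two claim that the splitting ``should apply to the consequent'' is not justified by the closure hypothesis: implication-closure only puts $\delta\rightarrow\beta_D$ \emph{inside} $\exfreedoctrine{P}$, which is weaker than its being universal-free there; it is your paragraph-three verification, not membership, that carries the argument, so the proof is only complete once that verification is written out. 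Second, that verification requires the conjunction $\delta\wedge\nu$ of two existential-free predicates to be existential-free, i.e.\ closure under finite conjunctions, which is a hypothesis of Theorem \ref{IP in Godel doctrine} and of Theorem \ref{thm (A-->B)^D iff A^D-->B^D} but \emph{not} of the present theorem. Note, however, that the paper's own proof makes the identical silent assumption when it applies the splitting of $\beta_D$ to $\sigma_D\wedge\alpha$, so at the paper's level of rigour the two arguments stand or fall together. As for what each route buys: the paper's stays parallel to the rule version (Theorem \ref{theorem weak markov rule}) and displays how the Gödel-doctrine axiom of enough quantifier-free predicates enters; yours is more economical, dispensing with that axiom entirely and isolating a reusable lemma --- implications with existential-free antecedent and quantifier-free consequent are themselves universal splittings with respect to existential-free predicates.
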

\begin{proof}
Since $\alpha$ is an existential-free predicate and $\beta_D$ is quantifier-free we have that $\forall b. \alpha(a,b) \rightarrow \beta_D(a)$ is an element of $\exfreedoctrine{P}(A)$. Thus, since $\exfreedoctrine{P}$ has enough quantifier-free elements by defintion of G\"odel doctrine, there exists an universal-free predicate of $\exfreedoctrine{P}$, i.e. a quantifier-free predicate $\sigma_D\in \exfreedoctrine{P}(\times C) $ such that $a:A\; | \; \forall c. \sigma_D(a,c) \dashv \vdash \forall b. \alpha(a,b) \rightarrow \beta_D(a)$. In particular, we have that $a:A\; | \; \forall c.\sigma_D(a,c) \wedge \forall b. \alpha(a,b) \vdash  \beta_D(a)$, and hence $a:A\; | \; \forall c.\forall b.(\sigma_D(a,c) \wedge  \alpha(a,b)) \vdash  \beta_D(a)$. Now, since $\beta_D$ is quantifier-free, i.e. it is universal-free in $\exfreedoctrine{P}$, there exist two terms $a:A \;|\; t(a): B$ and $a:A\;|\; t'(a):C$ such that:
\[ a:A\; | \; \sigma_D(a,t'(a)) \wedge  \alpha(a,t(a)) \vdash  \beta_D(a).\]
Therefore we have that $ a:A\; | \; \sigma_D(a,t'(a))  \vdash   (\alpha(a,b) \rightarrow \beta_D(a))[t(a)/b]$. Now, since we always have that  $ a:A\; | \; \forall c. \sigma_D(a,c) \vdash \sigma_D(a,t'(a))$ and $ a:A\; | \;   (\alpha(a,b) \rightarrow \beta_D(a))[t(a)/b] \vdash  \exists b.(\alpha(a,b)\rightarrow \beta_D(a))$, we can conclude that:
\[a:A\; | \; \top \vdash (\forall b. \alpha(a,b) \rightarrow \beta_D(a)) \rightarrow \exists b.(\alpha(a,b)\rightarrow \beta_D(a)).\]
\end{proof}

To obtain the usual presentation of Markov Principle as corollary of Theorem \ref{theorem generalised markov principle}, we simply have to require the bottom element $\bot$ of a G\"odel first-order hyperdoctrine to be \emph{quantifier-free}.
\begin{corollary}\label{theorem Markov}

Every G\"odel first-order hyperdoctrine $\hyperdoctrine{\mathcal{C}}{P}$  such that existential-free elements are closed with respect to implication and $\bot$ is a quantifier-free predicate satisfies \bemph{Markov Principle}, i.e. for every quantifier-free element $\alpha_D\in P(A\times B)$ it is the case that:
\[b:B\; | \; \top \vdash \neg \forall a. \alpha_D (a,b) \rightarrow \exists a . \neg \alpha_D (a,b).\]
\end{corollary}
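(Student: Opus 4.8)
The plan is to obtain Markov Principle as a direct specialisation of the Modified Markov principle established in Theorem \ref{theorem generalised markov principle}. The crucial observation is that negation is definable as implication into $\bot$, so that $\neg \phi$ is exactly $\phi \rightarrow \bot$. Under this reading, the target sequent $b:B\;|\;\top \vdash \neg\forall a.\alpha_D(a,b) \rightarrow \exists a.\neg\alpha_D(a,b)$ is literally the sequent $b:B\;|\;\top \vdash (\forall a.\alpha_D(a,b)\rightarrow\bot) \rightarrow \exists a.(\alpha_D(a,b)\rightarrow\bot)$, which is an instance of the conclusion of Theorem \ref{theorem generalised markov principle}.

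To carry this out, I would apply Theorem \ref{theorem generalised markov principle} after the following matching of data. The outer context sort $B$ here plays the role of the sort $A$ in the statement of Theorem \ref{theorem generalised markov principle}, while the quantified sort $A$ plays the role of its sort $B$; this only requires reindexing $\alpha_D\in P(A\times B)$ along the canonical swap isomorphism $B\times A \cong A\times B$, which preserves quantifier-free elements since these are stable under reindexing. I would then take the quantifier-free predicate $\beta_D$ of the theorem to be $\bot$, which is legitimate precisely because of the standing hypothesis that $\bot$ is a quantifier-free predicate. Finally, I take the existential-free predicate $\alpha$ of the theorem to be $\alpha_D$ itself: this is permitted because $\alpha_D$ is quantifier-free, hence in particular existential-free by Definition \ref{definition quantifier-free element}.

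With these substitutions, Theorem \ref{theorem generalised markov principle} yields exactly $b:B\;|\;\top \vdash (\forall a.\alpha_D(a,b)\rightarrow\bot) \rightarrow \exists a.(\alpha_D(a,b)\rightarrow\bot)$, which, rewriting $(-)\rightarrow\bot$ as $\neg(-)$, is the desired Markov Principle. The hypothesis that existential-free elements are closed under implication is inherited from Theorem \ref{theorem generalised markov principle} and is exactly what makes that theorem applicable. There is essentially no genuine obstacle here: the only points requiring care are the bookkeeping of variable names and the verification that each hypothesis of Theorem \ref{theorem generalised markov principle} is met by the chosen data, namely that $\bot$ is quantifier-free (assumed) and that $\alpha_D$ is existential-free (automatic from quantifier-freeness). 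Indeed, this mirrors exactly the passage from the Modified Markov Rule (Theorem \ref{theorem weak markov rule}) to the Markov Rule corollary carried out earlier in the section.
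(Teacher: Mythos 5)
Your proposal is correct and coincides with the paper's own proof: the paper also obtains the corollary by instantiating Theorem \ref{theorem generalised markov principle} with $\beta_D = \bot$ (quantifier-free by hypothesis), the existential-free predicate being $\alpha_D$ itself. Your additional bookkeeping (the swap of the roles of $A$ and $B$ via reindexing, and the observation that quantifier-free implies existential-free) is exactly the detail the paper leaves implicit.
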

\begin{proof}
It follows by Theorem \ref{theorem generalised markov principle} just by replacing $\beta_D$ with $\bot$, that is quantifier-free by hypothesis.
\end{proof}
We have proved that under suitable hypotheses, a G\"odel first-order hyperdoctrine satisfies (IP), (MP), (GMP) and the principle of Skolemisation. 

Therefore, combining Theorem \ref{IP in Godel doctrine}, Theorem  \ref{theorem generalised markov principle} (and Corollary \ref{theorem Markov}), and Proposition \ref{proposition skolemisation}, we can repeat the chain of equivalences we provided in Section \ref{section introducing dialectica, MP and IP}, and obtain the following main result.

\begin{theorem}\label{thm (A-->B)^D iff A^D-->B^D}
Let $\hyperdoctrine{\mathcal{C}}{P}$ be a G\"odel first-order hyperdoctrine such that:
\begin{itemize}
    \item existential-free elements are closed with respect to implication and finite conjunction;
    \item falsehood $\bot$ is a quantifier-free predicate.
\end{itemize}

Then for every $\psi_D$ in $P(I\times U\times X )$ and $\phi_D$ in $P(I\times V\times Y)$ quantifier-free predicates of $P$ we have that the formula:
\[ i:I\; |\; \exists u.\forall x . \psi_D (i,u,x)\to\exists v.\forall y.  \phi_D (i,v,y)\]
is provably equivalent to:
\[  i:I\; |\;  \exists f_0,f_1.\forall u,y. (\psi_D(i,u,f_1(i,u,y))\rightarrow \phi_D (i,f_0(i,u),y)).\]
\end{theorem}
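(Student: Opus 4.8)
The plan is to reproduce, inside the Gödel hyperdoctrine, exactly the chain of equivalences \eqref{eq 1 implication}--\eqref{eq 5 implication} from Section \ref{section introducing dialectica, MP and IP}, justifying each step by the categorical principle that has already been established under the stated hypotheses. Since the two hypotheses --- existential-free elements closed under implication and finite conjunction, and $\bot$ quantifier-free --- are precisely the assumptions needed to invoke Theorem \ref{IP in Godel doctrine}, Theorem \ref{theorem generalised markov principle} (with Corollary \ref{theorem Markov}) and Proposition \ref{proposition skolemisation}, the theorem is essentially an assembly of these three results. First I would note that, by definition of the exponential adjunction in the cartesian closed base, the double-existential statement $\exists f_0,f_1.\forall u,y.(\psi_D(i,u,f_1(i,u,y))\rightarrow\phi_D(i,f_0(i,u),y))$ is literally the Skolemised form of an appropriate $\forall\exists\forall\exists$ prenex formula, so it suffices to show the original implication is provably equivalent to that prenex formula.

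Concretely, I would establish the following equivalences in the internal language of $P$, reading them as provable bi-entailments $\dashv\vdash$ between elements of the fibre $P(I)$. Starting from the hypothesis
\[
i:I\;|\;\exists u.\forall x.\psi_D(i,u,x)\rightarrow\exists v.\forall y.\phi_D(i,v,y),
\]
I would first move the existential over $u$ out of the antecedent (using that $\exists$ in the premise of an implication becomes $\forall$) to obtain $\forall u.(\forall x.\psi_D\rightarrow\exists v.\forall y.\phi_D)$. Since $\psi_D$ is quantifier-free, $\forall x.\psi_D(i,u,x)$ is existential-free by point (iv) of Definition \ref{definition skolem doctrine}, so the inner implication has an existential-free premise and I may apply the Principle of Independence of Premise, Theorem \ref{IP in Godel doctrine}, to pull $\exists v$ outward, yielding $\forall u.\exists v.(\forall x.\psi_D\rightarrow\forall y.\phi_D)$. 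Next, since $\forall y$ commutes with the implication whose premise does not mention $y$, this is equivalent to $\forall u.\exists v.\forall y.(\forall x.\psi_D\rightarrow\phi_D)$. Now $\phi_D$ is quantifier-free and $\psi_D$ existential-free, so Theorem \ref{theorem generalised markov principle} (Modified Markov) applies to the inner formula, allowing me to pull $\exists x$ outward and reach $\forall u.\exists v.\forall y.\exists x.(\psi_D\rightarrow\phi_D)$.

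Finally I would Skolemise twice. Applying Proposition \ref{proposition skolemisation} to the block $\forall y.\exists x$ introduces a witness function of type $I\times U\times Y\to X$, and applying it again to $\forall u.\exists v$ introduces a function of type $I\times U\to V$; together these collapse the four alternating quantifiers into the double existential $\exists f_0,f_1.\forall u,y.(\psi_D(i,u,f_1(i,u,y))\rightarrow\phi_D(i,f_0(i,u),y))$, which is the desired right-hand side. I expect the main subtlety, rather than difficulty, to be bookkeeping: ensuring at each step that the predicate whose premise is being manipulated genuinely lies in the class (existential-free, respectively quantifier-free) required by the principle being invoked, and checking that the two hypotheses on closure under implication and conjunction are exactly what keep these classes stable through the implication and conjunction steps inside the proofs of Theorems \ref{IP in Godel doctrine} and \ref{theorem generalised markov principle}. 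The commutation of $\forall y$ with implication and the two routine applications of the exponential adjunction are purely formal and need no extra hypotheses.
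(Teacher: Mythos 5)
Your proposal is correct and takes essentially the same route as the paper: the paper's own proof is precisely the observation that combining Theorem \ref{IP in Godel doctrine}, Theorem \ref{theorem generalised markov principle} and Proposition \ref{proposition skolemisation} lets one repeat the chain of equivalences \eqref{eq 1 implication}--\eqref{eq 5 implication} of Section \ref{section introducing dialectica, MP and IP}, which is exactly the chain you reconstruct step by step. Your additional bookkeeping (that $\forall x.\psi_D$ is existential-free by stability under universal quantification, that the relevant predicates remain in the required classes after weakening, and that the exponential adjunction turns the double Skolemisation into the stated $\exists f_0,f_1$ form) is a faithful expansion of what the paper leaves implicit.
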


Theorem \ref{thm (A-->B)^D iff A^D-->B^D} fully represents a categorical version of the translation of the implication connective in the Dialectica interpretation. In particular, it shows that the equivalence $(\psi\rightarrow \phi)^D\leftrightarrow (\psi^D\rightarrow \phi^D)$ presented in Section \ref{section introducing dialectica, MP and IP} is perfectly modelled by a G\"odel first-order hyperdoctrine satisfying the natural additional closure properties of Theorem \ref{thm (A-->B)^D iff A^D-->B^D}.
%

\begin{remark}
Observe that Theorem \ref{thm (A-->B)^D iff A^D-->B^D} can be considered a stronger version of Theorem \ref{theorem principal 1}. Hence, once more, it converts the rule stated in the latter theorem into an actual principle.

In detail, by the thesis of Theorem \ref{thm (A-->B)^D iff A^D-->B^D}, it is enough to observe that the first sequent of the statement of Theorem \ref{theorem principal 1} is equivalent to the sequent: \[ i:I\; |\; \top\vdash\exists u.\forall x . \psi_D (i,u,x)\to\exists v.\forall y.  \phi_D (i,v,y)\] by the elimination and introduction rules for the implication, and that the second one is equivalent to the following: \[  i:I\; |\;  \top \vdash \exists f_0,f_1.\forall u,y. (\psi_D(i,u,f_1(i,u,y))\rightarrow \phi_D (i,f_0(i,u),y)).\] For this second equivalence one applies the implicational elimination and introduction to convert the second sequent of \ref{theorem principal 1} into: \[  i:I,u:U,y:Y\; |\;  \top \vdash \psi_D(i,u,f_1(i,u,y))\rightarrow \phi_D (i,f_0(i,u),y)\] which is actually equivalent to $i:I\; |\;  \top \vdash \exists f_0,f_1.\forall u,y. (\psi_D(i,u,f_1(i,u,y))\rightarrow \phi_D (i,f_0(i,u),y))$, by Corollary \ref{proposition rule of choice} and being the formula: $$i:I\; |\; \forall u,y. (\psi_D(i,u,f_1(i,u,y))\rightarrow \phi_D (i,f_0(i,u),y))$$ existential-free.

\end{remark}

Theorem \ref{thm (A-->B)^D iff A^D-->B^D} follows as a consequence of the fragment of first-order logic under which the internal language of a G\"odel first-order hyperdoctrine is closed. Observe that this fragment contains at least the whole intuitionistic first-order logic together with the Principle of Independence of Premise, the Modified Markov Principle and the Principle of Skolemisation. These principles, together with the rules of intuitionistic first-order logic are precisely what is needed to get the equivalence $(\psi\rightarrow \phi)^D\leftrightarrow (\psi^D\rightarrow \phi^D)$ in a G\"odel first-order hyperdoctrine.

Clearly any boolean doctrine satisfies these principles as well, as it models every inference rule of classic first-order logic. However, in general they are not satisfied by a usual hyperdoctrine, because they are not necessarily true in intuitionistic first-order logic. It turns out that \textit{the fragment of first-order logic modelled by a G\"odel hyperdoctrine is right in-between the intuitionistic first-order logic and the classical first-order logic}: it is  powerful enough to guarantee the equivalences in Section \ref{section introducing dialectica, MP and IP} that justify the Dialectica interpretation of the implication.

\section{Tripos-to-topos and G\"odel doctrines}
The tripos-to-topos construction was originally introduced in \cite{pitts02,hyland89} as a generalisation of the construction of the category of sheaves of a locale. Recently, this construction has been proved to be an instance of the exact completion of an elementary existential doctrine, we refer to \cite{maiettirosolini13b,maiettipasqualirosolini} for all the details. Let us briefly recall it.

\medskip

\noindent
\textbf{Tripos-to-topos.} Given a first-order hyperdoctrine $\hyperdoctrine{\mC}{P}$, the category $\mathsf{T}_P$ consists of:
 
\begin{itemize}

\item \textbf{objects:} are pairs $(A,\rho)$ where $\rho \in P(A\times A)$  satisfies:

\begin{itemize}

\item \emph{symmetry:} $ a_1,a_2:A\; | \;\rho(a_1,a_2)\vdash \rho(a_2,a_1)$;
\item \emph{transitivity:} $a_1,a_2,a_3:A\; | \;\rho(a_1,a_2)\wedge \rho(a_2,a_3)\vdash\rho(a_1,a_3)$.
\end{itemize}

\item \textbf{arrows} $\arrow{(A,\rho)}{\phi}{(B,\sigma)}$: are objects $\phi\in P(A\times B)$ such that:
\begin{enumerate}
\item $a:A,b:B\; | \;\phi(a,b)\wedge \rho(a,a) \vdash \sigma(b,b)$;
\item $a_1,a_2:A,b:B\; | \;\rho(a_1,a_2)\wedge \phi(a_1,b)\vdash \phi(a_2,b)$; 
\item $a:A,b_1,b_2:B\; | \;\sigma(b_1,b_2)\wedge \phi(a,b_1)\vdash \phi(a,b_2)$;
\item $a:A,b_1,b_2:B\; | \;\phi(a,b_1)\wedge \phi(a,b_2)\vdash \sigma(b_1,b_2)$;
\item $a :A\;|\;\rho(a,a)\vdash \exists b.\phi(a,b)$.
\end{enumerate}

\end{itemize}
Then the following holds:
\begin{theorem}
Let $\hyperdoctrine{\mC}{P}$ be a hyperdoctrine. Then $\mathsf{T}_P$ is an exact category.
\end{theorem}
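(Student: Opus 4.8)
The plan is to exploit the fact, recalled just above the statement, that the tripos-to-topos construction is an instance of the exact completion of an elementary existential doctrine \cite{maiettirosolini13b,maiettipasqualirosolini}. First I would check that a first-order hyperdoctrine $\hyperdoctrine{\mC}{P}$ is, in particular, an elementary and existential doctrine in the sense used there. It is existential because the left adjoints $\exists_{\pr}$ along projections are a special case of the adjoints $\exists_f$ postulated for every arrow $f$, the Beck--Chevalley condition is assumed, and Frobenius reciprocity $\exists_f(\alpha \wedge P_f\beta) \dashv\vdash \exists_f\alpha \wedge \beta$ holds because the fibres are Heyting algebras. It is elementary because the equality predicate on each $A$ can be taken to be $\delta_A := \exists_{\Delta_A}(\top)$, using the left adjoint along the diagonal $\arrow{A}{\Delta_A}{A\times A}$, and the defining laws of fibrewise equality follow from Beck--Chevalley and Frobenius. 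Once $P$ is displayed as an elementary existential doctrine, $\mathsf{T}_P$ coincides with its exact completion, which is exact by construction, and the statement follows immediately.

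For a self-contained argument I would instead verify the three defining conditions of an exact category directly in the internal language of $P$. Finite completeness: the terminal object is $(1,\top)$ for the terminal $1$ of $\mC$; the product of $(A,\rho)$ and $(B,\sigma)$ is $(A\times B, \rho\boxtimes\sigma)$, where $\rho\boxtimes\sigma$ reindexes $\rho$ and $\sigma$ along the evident projections and conjoins them; and the equaliser of two parallel functional relations $\phi,\psi \colon (A,\rho)\to(B,\sigma)$ is obtained by restricting $\rho$ to the agreement predicate $a_1,a_2:A \mid \rho(a_1,a_2)\wedge\exists b.(\phi(a_1,b)\wedge\psi(a_2,b))$. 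Each universal property is a routine check against the defining sequents (1)--(5) of a functional relation. Regularity: a functional relation $\phi$ factors as a regular epimorphism followed by a monomorphism via its image, the sub-per of $(B,\sigma)$ carried by the elements hit by $\phi$, namely $\sigma'(b_1,b_2):=\sigma(b_1,b_2)\wedge\exists a.\phi(a,b_1)$, where a functional relation is a regular epi exactly when $b:B\mid\sigma(b,b)\vdash\exists a.\phi(a,b)$. Effectiveness: an equivalence relation on $(A,\rho)$ is internally a $\rho$-compatible symmetric and transitive predicate $\theta$, reflexive on the domain of $\rho$; its quotient is simply $(A,\theta)$, with quotient map given by $\theta$ itself, and one checks that the kernel pair of this map recovers $\theta$.

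The main obstacle is regularity, and specifically the stability of regular epimorphisms, equivalently of the image factorisation, under pullback: this is exactly where the full first-order structure is needed. Computing the image of a pulled-back functional relation and comparing it with the pullback of the image forces one to commute $\exists$ past reindexing, which is precisely the content of the Beck--Chevalley condition for $\exists_f$ along arbitrary $f$, together with Frobenius reciprocity to handle the conjunctions appearing in $\sigma'$. Effectiveness is comparatively painless in the per presentation, since quotients are formed by enlarging the per rather than by a genuine colimit; the delicate point there is only to verify that the candidate quotient map is a well-defined functional relation and a regular epi, which again reduces to the existential structure. I would therefore organise the write-up around establishing the Beck--Chevalley-driven stability once and for all, after which regularity and effectiveness fall out together.
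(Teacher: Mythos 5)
Your first paragraph is, in substance, the paper's own proof: the paper offers no independent argument for this theorem, but justifies it by recalling (from Maietti--Rosolini) that $\mathsf{T}_P$ is an instance of the exact completion of an elementary existential doctrine, which is exact by that general theory. Your reduction fills in exactly what the paper leaves implicit, and correctly so: a first-order hyperdoctrine is existential with Frobenius reciprocity (Frobenius follows from the adjunction $\exists_f \dashv P_f$ because each $P_f$ preserves the Heyting implication), and it is elementary via $\delta_A := \exists_{\Delta_A}(\top_A)$, whose laws follow from Beck--Chevalley and Frobenius. Your second, self-contained route --- finite limits via sub-pers, image factorisation through $\sigma'(b_1,b_2)=\sigma(b_1,b_2)\wedge\exists a.\phi(a,b_1)$, regular epis as the ``surjective'' functional relations, and effectiveness by enlarging the per to the given equivalence relation --- is the standard direct verification (essentially Hyland--Johnstone--Pitts unwound in the internal language), and its outline is correct; what it buys is independence from the cited completion theorem, at the cost of many routine sequent checks. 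One caveat on your framing of the ``main obstacle'': pullback stability of images does \emph{not} require the full first-order structure, nor Beck--Chevalley for $\exists_f$ along arbitrary maps. The regular fragment suffices --- finite conjunctions, equality, and existentials along projections satisfying Frobenius and Beck--Chevalley for the substitution squares --- which is precisely why the general exactness theorem holds for elementary existential doctrines that need not have universal quantifiers, implication, or adjoints along arbitrary arrows at all. Since a first-order hyperdoctrine has all of this structure anyway, your argument goes through unchanged; but the stability step should be attributed to the regular fragment rather than presented as the place where the Heyting and arbitrary-$\exists_f$ structure is essential.
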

The construction of the category $\mathsf{T}_P$  can be presented in the more general context of elementary and existential doctrines, and it is also  called the \textbf{exact completion} of the elementary existential doctrine $P$, since it lifts to an adjunction between the 2-category of exact categories and that of elementary and existential doctrine. We refer to  \cite[Cor. 3.4]{maiettirosolini13b} for a complete description of the construction in the general case.

\subsection{Tripos-to-topos and exact completions}
We recall from \cite{trottamaietti2020} a useful characterisation of the tripos-to-topos construction of a first-order hyperdoctrine arising as an existential completion. Again, in the present work we present the results for hyperdoctrines, but the characterisation presented in \cite{trottamaietti2020} works for an arbitrary elementary and existential doctrine. 

To properly present such a characterisation we first need to recall from \cite{maiettipasqualirosolini,maiettirosolini13b,maiettirosolini13a} the construction of the category of \emph{predicates} of a first-order hyperdoctrine. The construction of this category is related to the \emph{comprehension and comprehensive diagonal completion}.

\begin{definition}\label{def comprehension comp}
Given a first-order hyperdoctrine $\hyperdoctrine{\mC}{P}$ we define the \textbf{comprehension completion} $\hyperdoctrine{\mG_P}{P_c}$ of $P$ as follows:
\begin{itemize}
\item an object of $\mG_P$ is a pair $(A,\alpha)$ where $A$ is a set and $\alpha\in P(A)$;
\item an arrow $\arrow{(A,\alpha)}{f}{(B,\beta)}$ is an arrow $\arrow{A}{f}{B}$ such that: $$a:A\;|\;\alpha(a)\vdash \beta(f(a)).$$
\end{itemize}
The fibres $P_c(A,\alpha)$ are given by those elements $\gamma$ of $P(A)$ such that $a:A\;|\;\gamma(a)\vdash \alpha(a)$ (i.e. $\gamma\leq \alpha$). Moreover, the action of $P_c$ on a morphism $f: (B,\beta)\ \rightarrow \ (A,\alpha)$ is defined as $P_c(f)(\gamma)=P_f (\gamma)\, \wedge\, \beta $ i.e.  the predicate: $$b:B  \;|\;\gamma(f(b))\wedge \beta(b)$$
where $\gamma\in P(A)$ is such that $\gamma\leq \alpha$.
\end{definition}

Similarly, the construction which freely adds comprehensive diagonal is provided by the \emph{extensional reflection}. We denote $\delta_A:=\exists_{\Delta} (\top_A)$. According to the internal language of a given doctrine $P$, the element $\delta_A \in P(A \times A)$ corresponds to the predicate: $$a_1\colon A,a_2 \colon A\;|\;a_1=a_2.$$

\begin{definition}\label{def extentiona reflection}
Given an elementary doctrine $\hyperdoctrine{\mC}{P}$ we can define \textbf{extensional reflection} $\doctrine{\mX_P}{P_x}$ of $P$ as follows: the base category $\mX_P$ is the quotient category of $\mC$ with respect to the equivalence relation where $f\sim g$ when: $$\top\vdash f(a)=g(a) \textnormal{  \, \, ( i.e. } \top_A\leq P_{\angbr{f}{g}}(\delta_B) \textnormal{ in category-theoretic notation )}$$
in context $a \colon A$, for two parallel arrows $f,g \colon A \to B$. The equivalence class of a morphism $f$ of $\mC$, i.e. an arrow of $\mX_P$, is denoted by $[f]$.
\end{definition}
Finally, we denoted by $\Pred{P}$ the \textbf{category of predicates} of a doctrine $P$, i.e. the category defined as: $$\Pred{P}:=\mX_{P_c}$$ where $P_c$ is the comprehension completion of $P$. Again, we refer to \cite{maiettipasqualirosolini,maiettirosolini13b,maiettirosolini13a} for a complete description of these constructions. Now we have all the instruments to recall the characterisation of tripos-to-topos of existential completions from \cite{trottamaietti2020}. Such a characterisation essentially shows that every tripos-to-topos of an existential completion is an instance of $(-)_{\mathsf{ex}/\mathsf{lex}}$ completion, namely the exact completion of a lex category in the sense of \cite{CARBONI199879,carboni_magno_1982}.

\begin{theorem}\label{thm: char milly trotta}

Let $\hyperdoctrine{\mC}{P}$ be a first-order hyperdoctrine. Then we have the equivalence:
\[\mathsf{T}_{P^{\exists}}\equiv \Pred{P}_{\mathsf{ex}/\mathsf{lex}}\]
of exact categories.
\end{theorem}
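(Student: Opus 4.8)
The plan is to exhibit an explicit equivalence between $\mathsf{T}_{P^{\exists}}$ and $\Pred{P}_{\mathsf{ex}/\mathsf{lex}}$ and to check that it transports the exact structure. First I would unfold both sides concretely. An object of $\mathsf{T}_{P^{\exists}}$ is a pair $(A,\rho)$ with $\rho$ a symmetric and transitive element of $P^{\exists}(A\times A)$; by definition of the existential completion, $\rho$ is represented by a predicate $\rho_0\in P(A\times A\times C)$ to be read as $\rho(a_1,a_2)\dashv\vdash\exists c\colon C.\rho_0(a_1,a_2,c)$. On the other side, by the Carboni--Vitale description of $(-)_{\mathsf{ex}/\mathsf{lex}}$ \cite{CARBONI199879}, an object of $\Pred{P}_{\mathsf{ex}/\mathsf{lex}}$ is an internal pseudo-equivalence relation in the finitely complete category $\Pred{P}$, i.e. a parallel pair $r_0,r_1\colon R\rightrightarrows G$ equipped with reflexivity, symmetry and transitivity cells (with $r_0,r_1$ not required to be jointly monic).

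The comparison functor $\Phi\colon\mathsf{T}_{P^{\exists}}\to\Pred{P}_{\mathsf{ex}/\mathsf{lex}}$ is built through comprehension. Given $(A,\rho)$ presented by $\rho_0$, I would take as carrier the object $G:=(A\times C,\ \rho_0(a,a,c))$ of $\Pred{P}$, comprehending $\rho_0$ along the diagonal, whose projection onto $A$ covers the support $a\colon A\mid\rho(a,a)$; and as relation the object $R$ comprehending the predicate that witnesses $\rho(a,a')$, with $r_0,r_1$ induced by the two projections. The reflexivity, symmetry and transitivity cells of the pseudo-equivalence relation are produced from the PER data for $\rho$ by transporting symmetry and transitivity of $\rho_0$ along comprehension, using that the existential-free predicates of $P^{\exists}$ are exactly those of $P$ as recorded in Proposition~\ref{ex-un-char}. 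Quotienting $G$ by $R$ in the ex/lex completion then recovers precisely the subquotient of $A$ that the PER $(A,\rho)$ names.

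Next I would define $\Phi$ on morphisms and verify it is an equivalence. A morphism of $\mathsf{T}_{P^{\exists}}$ is a functional relation in $P^{\exists}$; representing it by an existential-free predicate and applying the existential-free splitting property of Proposition~\ref{ex-un-char}, one extracts an arrow of $\Pred{P}$ between carriers compatible with the relations, well defined up to the equivalence built into $(-)_{\mathsf{ex}/\mathsf{lex}}$. Fullness and faithfulness then reduce to the same fact: an inequality between functional relations in $P^{\exists}$ is witnessed by an arrow of $\Pred{P}$ and conversely. For essential surjectivity I would start from an arbitrary pseudo-equivalence relation $R\rightrightarrows G$ in $\Pred{P}$, recall that every object and arrow of $\Pred{P}$ is a comprehension datum of $P$ up to extensional reflection, read the relation as a predicate of $P$, and take its existential closure along the witnessing objects to obtain a PER $\rho$ in $P^{\exists}$ whose image under $\Phi$ is isomorphic to the given relation.

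The step I expect to be the main obstacle is essential surjectivity together with the independence of $\Phi$ from the chosen existential representative: a pseudo-equivalence relation in $\Pred{P}$ carries genuine, non-monic carrier and relation objects, whereas a PER in $P^{\exists}$ packages the same information as a single existential predicate, and matching the two means carefully transporting the reflexivity, symmetry and transitivity witnesses across both comprehension and the existential completion while quotienting by the ex/lex and extensional-reflection equivalences. A cleaner alternative that sidesteps this bookkeeping is to compare universal properties: $\mathsf{T}_{(-)}$ exhibits $\mathsf{T}_{P^{\exists}}$ as the free exact category on the elementary existential doctrine $P^{\exists}$, while $(-)_{\mathsf{ex}/\mathsf{lex}}$ exhibits $\Pred{P}_{\mathsf{ex}/\mathsf{lex}}$ as the free exact category on the lex category $\Pred{P}$. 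It would then suffice to show that, for every exact category $\mathcal{E}$, exact functors out of either correspond naturally to the same data, chaining the universal property of the existential completion (reducing morphisms out of $P^{\exists}$ to morphisms out of $P$) with the universal property of the category of predicates (identifying elementary doctrine morphisms $P\to\Psi_{\mathcal{E}}$ into the subobjects doctrine with finite-limit-preserving functors $\Pred{P}\to\mathcal{E}$); uniqueness of the free exact completion then yields the claimed equivalence.
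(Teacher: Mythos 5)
The paper itself contains no proof of Theorem \ref{thm: char milly trotta}: the statement is recalled verbatim from \cite{trottamaietti2020}, so there is no internal argument to measure your proposal against, and what follows assesses it on its own terms. Your outline is essentially viable, and the explicit-functor route is close in spirit to how results of this kind are established in the cited literature. You correctly isolate the two mechanisms that make the comparison work: first, that the order of $P^{\exists}$ is \emph{by definition} witnessed by actual arrows of $\mathcal{C}$, which is exactly what converts the symmetry and transitivity of a partial equivalence relation in $P^{\exists}(A\times A)$ into the reflexivity, symmetry and transitivity cells of a pseudo-equivalence relation in $\Pred{P}$, and (via the splitting property recorded in Proposition \ref{ex-un-char}) converts functional relations into arrows between comprehensions; second, that essential surjectivity is the step needing the \emph{elementary} structure of $P$, since a pseudo-equivalence relation $r_0,r_1\colon R\rightrightarrows G$, with $G=(X,\gamma)$ and $R=(Y,\theta)$, must be re-expressed as the element $\exists y.(\theta(y)\wedge r_0(y)=x_1\wedge r_1(y)=x_2)$ of $P^{\exists}(X\times X)$, and its PER axioms are then verified using the reflexivity, symmetry and transitivity cells together with substitution of equals.

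Two points, however, carry real mathematical content and are passed over silently. (a) Both of your routes presuppose that $P^{\exists}$ is again an \emph{elementary existential} doctrine, i.e. that the existential completion preserves fibred finite meets and equality: without this, $\mathsf{T}_{P^{\exists}}$ is not even well defined (transitivity and the conditions on functional relations use $\wedge$ in the fibres of $P^{\exists}$), its exactness and the biadjunction of \cite{maiettirosolini13b} do not apply, and your essential-surjectivity step cannot be formulated. This preservation statement is a theorem about $(-)^{\exists}$, not a formality, and must be invoked explicitly. (b) In the universal-property route there is a 2-categorical mismatch: the existential completion is free with respect to morphisms of \emph{plain} doctrines, whereas your chain needs an equivalence between morphisms of elementary existential doctrines $P^{\exists}\to \mathrm{Sub}_{\mathcal{E}}$ and morphisms of elementary doctrines $P\to \mathrm{Sub}_{\mathcal{E}}$; concretely, one must check that the canonical extension of an elementary morphism (obtained by taking images in the exact category $\mathcal{E}$ along projections) still preserves meets and equality, which uses Frobenius reciprocity in $\mathcal{E}$ and does not follow formally from the universal property you cite. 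Similarly, the ``universal property of the category of predicates'' you invoke (elementary morphisms $P\to\mathrm{Sub}_{\mathcal{E}}$ correspond to lex functors $\Pred{P}\to\mathcal{E}$) is itself a nontrivial statement, amounting to the combined universal properties of the comprehension completion and the extensional reflection of Definitions \ref{def comprehension comp} and \ref{def extentiona reflection}; it is true, but it is part of what has to be proved. With these gaps filled, either of your routes closes the argument.
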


\subsection{Tripos-to-topos for G\"odel first-order hyperdoctrines}

We recall that a G\"odel first-order hyperdoctrine is in particular the existential completion of its subdoctrine of existential-free elements by Theorem \ref{char} and Proposition \ref{ex-un-char}. Therefore, we are able (i.e. have the necessary conditions for) to apply Theorem \ref{thm: char milly trotta} and can  conclude the following characterisation of the tripos-to-topos construction of G\"odel first-order hyperdoctrines:

\begin{theorem}\label{thm: char tripos-to-topos godel hyperdoctrines}
Let $\hyperdoctrine{\mC}{P}$ be a G\"odel first-order hyperdoctrine. The the equivalence of exact categories:
\[\mathsf{T}_{P}\equiv \Pred{\exfreedoctrine{P}}_{\mathsf{ex}/\mathsf{lex}}\]
holds.
\end{theorem}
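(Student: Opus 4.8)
The plan is to reduce the statement to the already-established characterisation of Theorem \ref{thm: char milly trotta}, by exhibiting the G\"odel hyperdoctrine $P$ as the existential completion of its subdoctrine $\exfreedoctrine{P}$ of existential-free predicates and then reading off the right-hand side directly.

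First I would record the key identification $P \cong \excomp{(\exfreedoctrine{P})}$. Since $P$ is a G\"odel doctrine it is in particular a Skolem doctrine, and hence it has enough existential-free predicates by point (iii) of Definition \ref{definition skolem doctrine}. As $P$ is moreover existential, Proposition \ref{ex-un-char} applies and tells us precisely that $P$ is an existential completion, the completed doctrine being the full subdoctrine $\exfreedoctrine{P}$ of its existential-free elements; this yields the isomorphism $P \cong \excomp{(\exfreedoctrine{P})}$ in $\CatDoc$. I would then invoke Theorem \ref{thm: char milly trotta} with $\exfreedoctrine{P}$ in the role of its input doctrine, obtaining the equivalence of exact categories \[ \mathsf{T}_{\excomp{(\exfreedoctrine{P})}} \equiv \Pred{\exfreedoctrine{P}}_{\mathsf{ex}/\mathsf{lex}}. \] Transporting the left-hand side along the isomorphism $P \cong \excomp{(\exfreedoctrine{P})}$, and using that the tripos-to-topos construction $\mathsf{T}_{(-)}$ carries isomorphic (indeed equivalent) doctrines to equivalent exact categories, gives $\mathsf{T}_P \equiv \Pred{\exfreedoctrine{P}}_{\mathsf{ex}/\mathsf{lex}}$, which is the claim.

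The main obstacle is checking that $\exfreedoctrine{P}$ genuinely satisfies the hypotheses under which Theorem \ref{thm: char milly trotta} was proved. That theorem is phrased for first-order hyperdoctrines but, as remarked after its statement, holds for an arbitrary elementary existential doctrine; so I would need to verify that $\exfreedoctrine{P}$ carries the elementary structure required to form the comprehension and extensional-reflection completions entering the definition of $\Pred{-}$, concretely that the equality predicates $\delta_A$ restrict to existential-free elements, so that $\exfreedoctrine{P}$ is elementary and $\Pred{\exfreedoctrine{P}}$ is well defined. I would also make explicit the invariance of $\mathsf{T}_{(-)}$ under equivalence of doctrines, since the whole argument hinges on replacing $P$ by the isomorphic completion $\excomp{(\exfreedoctrine{P})}$. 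Once these two points are secured, the remaining work is the purely formal substitution carried out above.
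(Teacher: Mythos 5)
Your proposal is correct and follows essentially the same route as the paper: its proof likewise consists of identifying $P$ as the existential completion of $\exfreedoctrine{P}$ (via Theorem \ref{char} and Proposition \ref{ex-un-char}) and then applying Theorem \ref{thm: char milly trotta} to $\exfreedoctrine{P}$. The two verification points you flag --- that $\exfreedoctrine{P}$ carries the elementary structure needed for $\Pred{\exfreedoctrine{P}}$ to be well defined, and that $\mathsf{T}_{(-)}$ sends equivalent doctrines to equivalent exact categories --- are left implicit in the paper as well, so your treatment is, if anything, slightly more careful.
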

We recall from \cite{pitts02,hyland89} that when a doctrine is a tripos, then its tripos-to-topos is a topos. Hence, we have the following corollary for G\"odel first-order hyperdoctrines:
\begin{corollary}
Let $\hyperdoctrine{\mC}{P}$ be a G\"odel first-order hyperdoctrine. If $P$ is a tripos, then $ \Pred{\exfreedoctrine{P}}_{\mathsf{ex}/\mathsf{lex}}$ is a topos.
\end{corollary}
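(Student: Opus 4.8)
The plan is to derive this corollary directly from the characterisation established in Theorem \ref{thm: char tripos-to-topos godel hyperdoctrines} together with the Hyland--Johnstone--Pitts result recalled immediately beforehand. Since the hard combinatorial work has already been done in proving that theorem, this corollary amounts to transporting the property of being a topos across an equivalence of categories.

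First I would invoke the hypothesis that $\hyperdoctrine{\mC}{P}$ is a tripos. By the recalled fact from \cite{pitts02,hyland89}, whenever a doctrine is a tripos its tripos-to-topos category $\mathsf{T}_P$ is an (elementary) topos; I would simply apply this to our $P$ to conclude that $\mathsf{T}_P$ is a topos. Since $P$ is in particular a G\"odel first-order hyperdoctrine, Theorem \ref{thm: char tripos-to-topos godel hyperdoctrines} applies and furnishes an equivalence of exact categories
\[
\mathsf{T}_{P}\equiv \Pred{\exfreedoctrine{P}}_{\mathsf{ex}/\mathsf{lex}}.
\]

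The final step I would take is to observe that being a topos is a property invariant under equivalence of categories: an equivalence preserves finite limits, exponentials and the subobject classifier (equivalently, it preserves all the relevant universal properties up to isomorphism). Composing the equivalence above with the fact that $\mathsf{T}_P$ is a topos, I would conclude that $\Pred{\exfreedoctrine{P}}_{\mathsf{ex}/\mathsf{lex}}$ is a topos as well, which is exactly the claim.

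I do not anticipate any genuine obstacle here, as the statement is an immediate consequence of two already-available results; the only point requiring any care is the standard observation that topos structure is stable under categorical equivalence, which one could either take as folklore or justify by noting that each characterising universal property of a topos is itself equivalence-invariant. The substantive content lives in Theorem \ref{thm: char tripos-to-topos godel hyperdoctrines}, on which this corollary rests.
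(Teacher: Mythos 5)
Your proposal is correct and matches the paper's own (implicit) argument exactly: the paper derives this corollary by combining the recalled fact that the tripos-to-topos of a tripos is a topos with the equivalence $\mathsf{T}_{P}\equiv \Pred{\exfreedoctrine{P}}_{\mathsf{ex}/\mathsf{lex}}$ of Theorem \ref{thm: char tripos-to-topos godel hyperdoctrines}, transporting the topos structure across the equivalence just as you do.
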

Given this corollary we might be tempted to call these toposes Dialectica toposes. These are however different from Biering's Dialectica toposes.

\begin{remark}
The notion of \emph{Dialectica topos}  was introduced in \cite{Biering_dialecticainterpretations} as the tripos-to-topos of a suitable tripos called \emph{dialectica tripos}. In \cite{hofstra_2006} Hofstra characterises triposes arising in terms of ordered PCAs equipped with a filter. This characterisation includes Effective Topos-like triposes,
but also the triposes for relative, modified and extensional realisability and the
dialectica tripos. Therefore, the dialectica tripos can been seen as a tripos araising from a suitable ordered PCA. Triposes given by PCAs are known to be instances of a general completion that freely adds left adjoints along arbitrary maps. Hofstra was the first to obserce this fact, see \cite{hofstra_2006,hofstra_2003}, but later in \cite{Frey2020} and \cite{trottamaietti2020} it is proved that the construction identified by Hofstra is a particular case of the full existential completion of a primary doctrine. 

In this paper we have proposed a different approach to the definition of doctrines related to dialectica interpretation, focusing on the logical principles and rules we need to properly translate the implication connective as in the Dialectica. Therefore, we could say that our approach is more syntactic, and not related to realisability in general. The Dialectica tripos introduced in \cite{Biering_dialecticainterpretations} is not a G\"odel doctrine in general, since G\"odel doctrines are given by an existential completion just along projections (and by a universal completion), while the Dialectica tripos is an instance of the full existential completion. Therefore the Dialectica tripos satisfies different structural properties with respect to an arbitrary G\"odel doctrine. For example, the Dialectica tripos has left adjoints along every arrow, satisfying Beck Chevalley conditions, while in an arbitrary G\"odel doctrine the Beck Chevalley conditions are not satisfied along arbitrary maps. Employing the universal properties of the existential and universal completions, one can show that the Dialectica tripos just contains a G\"odel doctrine, but it is not equivalent to such a doctrine. 
\end{remark}

We can also relate this work to Maietti's work on Joyal's arithmetic universes.
\begin{remark}
Observe that categories arising as tripos-to-topos results of G\"odel first-order hyperdoctrines have the same abstract presentation as the so called \emph{Joyal-arithmetic universes} introduced by Maietti in \cite{maietti2010}. Recall that a Joyal-arithmetic universe is defined as the exact completion $\Pred{\mathcal{S}}_{\mathsf{ex}/\mathsf{lex}}$ of the category of predicates of a Skolem theory $\mathcal{S}$ as defined in \cite[Def. 2.2]{maietti2010}, namely a cartesian category with a parameterised natural
numbers object where all the objects are finite products of the natural numbers
object. 
\end{remark}

\section{Conclusion}
This article is the culmination of various intertwined investigations begun in \cite{trotta_et_al:LIPIcs.MFCS.2021.87} and \cite{trotta-lfcs2022}. 
Inspired by Hofstra \cite{hofstra2011} and Hyland \cite{Hyland2002}, as well as by the work of Maietti and Trotta \cite{trottamaietti2020},
itself inspired by Trotta \cite{trotta2020}, we embarked in the programme of expanding the characterisation of the categorical version of the Dialectica Interpretation, to complete the work in Hofstra as far as the characterisation of the Dialectica is concerned
and to make sure that all the logical principles involved in the interpretation are represented in the 
categorical models obtained. 

Since the work on the fibrational setting seemed too abstract and hard to grasp, especially for the logic audience
we intended to communicate with, we opted for descriptions on 
the level of hyperdoctrines in \cite{trotta-lfcs2022} and 
\cite{dialecticaprinciples2022}, where the doctrines are the poset reflections of the fibrations 
used early on. This crystallised our understanding of the issue of 
quantifier-free formulae in the categorical setting, but also made  clear the import of non-intuitionistic principles such as Independence of Premise and the Markov Principle, that had been discussed by logicians, but not in categorical terms, as far as we are aware. Our investigation is, so far, restricted to the environment of the Dialectica  interpretation, but it has wider reach, helping to complete the program of categorification of logic, as originally suggested by Lawvere.

We hope to carry on exploring other side issues of this investigation. We started connecting this work to the work on categorical realisability and computability, as described by Pitt's tripos theory and the tripos to topos construction~\cite{pitts02} in the final section of this article, but much remains to be done.
A different direction that we have not even started to explore is the extension of our work to generalised versions of the Dialectica interpretation, as already hinted in the text, to dependent type theory \cite{mossvonglehn2018}. 
Finally the work in the original Dialectica category model \cite{depaiva1991dialectica} has had several applications to computer science problems like concurrency theory, in the shape of Petri Nets~\cite{dialecticaPN} and others~\cite{Winskel2022}.
We plan to investigate if these and other applications can be improved by our doctrinal version of the models.
\bibliographystyle{plain}
\bibliography{references}
\end{document}